\numberwithin{equation}{section}
\newtheorem{theorem}{Theorem}
\newtheorem{lemma}[theorem]{Lemma}
\theoremstyle{definition}
\newtheorem{definition}[theorem]{Definition}
\newtheorem*{theorem*}{Theorem}
\newtheorem*{acknowledgement*}{Acknowledgement}
\newtheorem{proposition}[theorem]{Proposition}
\newtheorem{remark}[theorem]{Remark}
\newtheorem*{question*}{Question}
\newtheorem*{conjecture*}{Conjecture}
\newcommand{\gt}[0]{\tilde{g}}
\newcommand{\gb}[0]{\bar{g}}
\newcommand{\delt}[0]{\widetilde{\nabla}}
\newcommand{\del}[0]{\nabla}
\newcommand{\gam}[0]{\Gamma}
\newcommand{\gamt}[0]{\widetilde{\Gamma}}	
\newcommand{\Rm}[0]{\operatorname{Rm}}
\newcommand{\Rmt}[0]{\widetilde{\Rm}}
\newcommand{\Rmb}[0]{\overline{\Rm}}
\newcommand{\Rc}[0]{\operatorname{Rc}}
\newcommand{\Rct}[0]{\widetilde{\Rc}}
\newcommand{\defn}[0]{\doteqdot}
\newcommand{\ve}[1]{{\mathbf{#1}}}
\newcommand{\pd}[2]{\frac{\partial #1}{\partial #2}}
\newcommand{\pdt}[0]{\frac{\partial}{\partial t}}
\newcommand{\supp}[0]{\operatorname{supp}}
\newcommand{\trace}[0]{\operatorname{tr}}
\newcommand{\df}[0]{\doteqdot}
\newcommand{\dint}[1]{\iint\limits_{#1}}
\newcommand{\llangle}[0]{\left\langle}
\newcommand{\rrangle}[0]{\right\rangle}
\newcommand{\vol}[0]{\operatorname{vol}}
\begin{document}

\title{Backwards uniqueness of the Ricci flow}



\author{Brett Kotschwar}
\address{Department of Mathematics, MIT, Cambridge, MA 02139}
\thanks{This research was partially supported by NSF Grant DMS-0805834.}
\email{kotschwar@math.mit.edu}



\date{April 2009}

\keywords{}

\begin{abstract}
	In this paper, we prove a unique continuation or ``backwards-uniqueness''
theorem for solutions to the Ricci flow.  A particular consequence is that
the isometry group of a solution cannot expand within the lifetime of the solution.
\end{abstract}

\maketitle

\section{Introduction}

Let $M$ be a smooth $n$-dimensional manifold.  The Ricci flow, 
introduced by Richard Hamilton in \cite{H1}, is the evolution equation 
\begin{equation}\label{eq:rf}
	\pdt g = -2\Rc(g)
\end{equation}
for a family $g(t)$ of Riemannian metrics $g$ on $M$.  
Equation \eqref{eq:rf} is a quasilinear but only weakly-parabolic system --
the degeneracy being an artifact
of the diffeomorphism invariance of the Ricci tensor.
Properties such as short-time existence and uniqueness of solutions 
are therefore not direct consequences of standard parabolic theory.  
On closed $M$ these were established in \cite{H1} by
the use of a modified Nash-Moser inverse function theorem.  The proof of existence was
simplified shortly thereafter by DeTurck \cite{D} who observed that
via a judicious change of gauge, the question could be reduced to that for
an equivalent strictly parabolic system. His method was further parlayed 
into a similarly elegant
proof of uniqueness \cite{H3} which became the basis for later extensions of these results
to non-compact manifolds.  In this setting, Shi \cite{S} proved 
proved short-time existence to \eqref{eq:rf} for complete initial data of bounded curvature; 
uniqueness, in the category of complete solutions with uniformly bounded curvature, was proven by Chen-Zhu 
in the relatively recent \cite{CZ}.  
  
Our aim in this paper is to prove the following complementary uniqueness result.
\begin{theorem}\label{thm:rfbu}
  Let $M$ be an $n$-dimensional manifold.
  Suppose $g(t)$, $\gt(t)$ are complete solutions to \eqref{eq:rf} with 
  $|\Rm|_{g}\leq K$, $|\Rmt|_{\gt}\leq \tilde{K}$ on 
  $M\times [0, T]$.
  If $g(x, T)  = \gt(x, T)$, then $g(x, t) = \gt(x, t)$ on 
  $M\times [0, T]$.  
\end{theorem}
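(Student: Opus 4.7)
The plan is to reverse time by setting $\tau \df T - t$ and to reduce Theorem~\ref{thm:rfbu} to a unique-continuation problem at $\tau = 0$ for a coupled system satisfied by certain geometric differences of the two flows. Introduce
$$h \df g - \gt, \quad A \df \gam - \gamt, \quad S \df \Rm - \Rmt, \quad \Phi \df \del \Rm - \delt \Rmt;$$
modulo a fixed reference connection each of these may be regarded as a tensor on $M$. The hypothesis $g(\cdot, T) = \gt(\cdot, T)$ becomes $h = A = S = \Phi \equiv 0$ at $\tau = 0$, and the goal is to propagate this vanishing to every $\tau \in [0,T]$.

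Differentiating the defining expressions and using the standard evolution of the curvature under Ricci flow produces, schematically,
\begin{align*}
\pdt h &= -2S + \Rm * h,\\
\pdt A &= \del S + A*\Rm + \Rm * \del h,\\
(\pdt - \Delta) S &= \Rm * S + A * \Phi + A * \del A + A*A*\Rm,
\end{align*}
together with an analogous parabolic equation for $\Phi$ whose right-hand side is polynomial in $h, \del h, A, \del A, S, \Phi$. The structural feature that makes the problem tractable is that $h$ and $A$ satisfy \emph{ODE}-type equations, while $S$ and $\Phi$ satisfy forward-\emph{parabolic} equations; using $|\Rm|_g, |\Rmt|_{\gt}\leq K$ and Shi-type derivative estimates for $\Rmt$, every term on the right is dominated pointwise by $|h| + |A| + |S| + |\Phi| + |\del A|$.

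The conclusion follows from two Carleman-type estimates sharing a common weight $e^{\alpha \phi(\tau)}$ concentrating as $\tau \to 0^+$: a parabolic estimate
$$\int_0^\epsilon \int_M e^{2\alpha \phi}\bigl(\tau^{-1}|V|^2 + |\del V|^2\bigr)\,dV_g\,d\tau \leq C \int_0^\epsilon \int_M e^{2\alpha\phi}|(\pdt - \Delta)V|^2\,dV_g\,d\tau,$$
applied to $V = (S, \Phi)$, and an ODE-type estimate
$$\int_0^\epsilon \int_M e^{2\alpha \phi}\tau^{-1}|W|^2\,dV_g\,d\tau \leq C \int_0^\epsilon \int_M e^{2\alpha\phi}\tau|\pdt W|^2\,dV_g\,d\tau,$$
applied to $W = (h, A)$. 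Summing these and invoking the differential inequalities above, the coupling terms on the right---crucially including the $|\del A|^2$ bounding $\Phi$---are absorbed into the left when $\alpha$ is taken sufficiently large, forcing $h, A, S, \Phi \equiv 0$ on $\tau \in [0, \epsilon]$. A standard open--closed argument on the set $\{\tau \in [0,T] : g(\cdot,\tau) = \gt(\cdot,\tau)\}$ then extends vanishing to the entire interval.

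The principal obstacle is the proof of these two Carleman estimates on a complete, non-compact manifold. The natural approach is space-time integration by parts against $e^{\alpha \phi}$, but one must (i) choose $\phi$ whose convexity in $\tau$ produces the zeroth-order term on the left, (ii) calibrate the relative weights on $|\del V|^2$ and $\tau^{-1}|V|^2$ so that the parabolic and ODE inequalities combine compatibly under summation, and (iii) control all cutoff and boundary errors uniformly along a curvature-adapted exhaustion of $M$, using only the bounded-curvature hypothesis and completeness of $g$. Once those technical estimates are in hand, the key algebraic observation---that $\del A$ sits at the same differential order as $\Phi$, while $S$ sits at the same order as $\del h$---is what allows the absorption to close.
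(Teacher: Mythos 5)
Your proposal correctly identifies the overall strategy -- the same one the paper takes, following Alexakis: reduce to a coupled PDE--ODE system of differential inequalities for the differences of curvatures, connections, and metrics, and close the argument with matching Carleman estimates of parabolic and ODE type. The technical flavor (exponential weight $e^{\alpha\phi(\tau)}$ vs.\ the paper's polynomial weight $\lambda^{-m}$ after Lees--Protter, and continuation by open--closed vs.\ the paper's iteration on sub-intervals of a fixed length) is different but not a gap.

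There is, however, a genuine gap in the choice of the ODE block. You take $W = (h, A)$, but your own parabolic evolution equations for $S$ and $\Phi$ contain $\nabla A$ on the right-hand side (e.g.\ the $B*\Rmt = \nabla A*\Rmt$ term in the evolution of $T = S$, and the $B*\delt\Rmt$ term in the evolution of $U = \Phi$). The ODE Carleman estimate you state produces only $\tau^{-1}|W|^2$ on the left -- no $|\nabla W|^2$ term -- while the parabolic Carleman estimate produces $|\nabla V|^2 = |\nabla S|^2 + |\nabla \Phi|^2$, not $|\nabla A|^2$. So when you ``sum and absorb,'' the $|\nabla A|^2$ term on the right-hand side of the parabolic inequality has nothing on the left to absorb it; your final sentence, asserting that this absorption ``closes'' because $\nabla A$ is at the same differential order as $\Phi$, is exactly where the argument breaks. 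The paper's remedy is to enlarge the ODE block to $\ve{Y} = h\oplus A\oplus B$ with $B \df \nabla A$ treated as its \emph{own} field satisfying an ODE-type inequality. Then $\nabla A = B$ appears on the right of both the parabolic and the ODE inequality only through $|\ve{Y}|^2$ (zeroth order), never through $|\nabla\ve{Y}|^2$, and the ODE Carleman estimate produces $\tau^{-1}|B|^2 = \tau^{-1}|\nabla A|^2$ on the left to pay for it. The cost is that $\partial_t B$ now involves $\nabla U = \nabla\ve{X}$, so the ODE inequality for $\ve{Y}$ must admit $|\nabla\ve{X}|^2$ on its right-hand side -- which is exactly what the parabolic Carleman estimate controls. (See the remark following the paper's Proposition on the coupled inequalities: it is essential that $\nabla\ve{Y}$ not appear; this constraint forces the inclusion of $B$.)

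A secondary point: the verification that $\ve{X}$, $\nabla\ve{X}$, and $\ve{Y}$ actually satisfy the growth hypothesis \eqref{eq:bugrwth} on a non-compact manifold is nontrivial and occupies real work in the paper. Because the flows agree at time $T$ and have uniformly bounded curvature, Shi's estimates give uniform bounds on all $\nabla^{(m)}\Rm$, $\delt^{(m)}\Rmt$ on $[\delta, T]$, and integrating back from $t = T$ gives boundedness of $A$, $B = \nabla A$, and the mixed derivatives $\nabla^{(m)}\delt^{(l)}\Rmt$; this establishes the much stronger statement that everything is uniformly bounded (not merely of controlled exponential growth). Your proposal waves at ``controlling cutoff and boundary errors along a curvature-adapted exhaustion'' but does not supply this step, which is where the bounded-curvature hypothesis enters in an essential way.
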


One consequence is that the flow does not sponsor the creation of new isometries
within the lifetime of a solution.  This answers a question of A. Fischer (cf. \cite{CLN}),
and, together with the uniqueness results of Hamilton \cite{H1} and 
Chen-Zhu \cite{CZ}, implies that the equation \eqref{eq:rf} 
preserves the isometry group of the solution.

\begin{theorem}\label{thm:isom}
	If $g$ is a solution to the Ricci flow with uniformly bounded curvature on $[0, T]$, then 
	$\operatorname{Isom}(g(t)) \subseteq \operatorname{Isom}(g(0))$ for all $0 \leq t \leq T$.
\end{theorem}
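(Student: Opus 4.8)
The plan is to deduce Theorem~\ref{thm:isom} directly from the backwards-uniqueness statement of Theorem~\ref{thm:rfbu}, exploiting the diffeomorphism invariance of \eqref{eq:rf}. Fix $t_0 \in [0,T]$ and let $\phi \in \operatorname{Isom}(g(t_0))$; by the theorem of Myers--Steenrod $\phi$ is a smooth diffeomorphism of $M$, so it makes sense to pull back the whole flow by $\phi$. The goal is to show $\phi \in \operatorname{Isom}(g(0))$, and since $t_0$ and $\phi$ are arbitrary this gives the asserted inclusion.

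First I would check that $\gt(t) \defn \phi^{\ast} g(t)$ is again a complete solution of \eqref{eq:rf} on $M\times[0,T]$ obeying the same curvature bound. Because $\phi$ is a fixed diffeomorphism, $\pdt \gt = \phi^{\ast}(\pdt g) = -2\phi^{\ast}\Rc(g) = -2\Rc(\phi^{\ast} g) = -2\Rc(\gt)$, the third equality being the naturality of the Ricci tensor. Completeness of $\gt(t)$ is immediate since $\phi$ is a diffeomorphism, and $|\Rmt|_{\gt}(x) = |\Rm|_{g}(\phi(x)) \le K$ for all $x$, so $\gt$ satisfies the hypotheses of Theorem~\ref{thm:rfbu}. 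Moreover, the assumption $\phi \in \operatorname{Isom}(g(t_0))$ is precisely the statement that $\gt(\cdot, t_0) = \phi^{\ast} g(\cdot, t_0) = g(\cdot, t_0)$.

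Next I would apply Theorem~\ref{thm:rfbu} on the time interval $[0, t_0]$ (in place of $[0,T]$) to the pair $g$, $\gt$: both are complete solutions with uniformly bounded curvature and they agree at the final time $t_0$, so the theorem yields $g(x,t) = \gt(x,t)$ for all $(x,t) \in M\times[0,t_0]$. Evaluating at $t=0$ gives $\phi^{\ast} g(0) = g(0)$, i.e.\ $\phi \in \operatorname{Isom}(g(0))$, as desired.

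The substance of this argument is entirely carried by Theorem~\ref{thm:rfbu}; the only points needing care are the bookkeeping above --- that $\phi^{\ast}g$ is an admissible comparison solution and that the interval of application may be shrunk to $[0,t_0]$ --- and these are routine. (Running the same argument on $[t_0,T]$ using forward uniqueness for complete bounded-curvature solutions from \cite{H1}, \cite{CZ} gives the reverse inclusion, and hence equality of the isometry groups, as remarked after the statement.) Consequently there is no real obstacle here: all of the difficulty lies in the proof of Theorem~\ref{thm:rfbu} itself.
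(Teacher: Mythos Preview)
Your proposal is correct and follows essentially the same approach as the paper: pull the flow back by the putative isometry $\phi$, observe via naturality of $\Rc$ that $\phi^{*}g(t)$ is again an admissible solution agreeing with $g$ at the relevant final time, and invoke Theorem~\ref{thm:rfbu}. The only cosmetic difference is that you fix an arbitrary $t_0$ and work on $[0,t_0]$, whereas the paper argues with $t_0 = T$ (the general case then follows by restricting the interval), and you make explicit the appeal to Myers--Steenrod for smoothness of $\phi$.
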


Another consequence is that a general solution cannot become a Ricci soliton in finite time.
\begin{theorem}\label{thm:soliton}
Suppose $g(t)$ is a complete solution to the Ricci flow on $[0, T]$ with
uniformly bounded curvature.  If $\bar{g} \df g(T)$ satisfies
\begin{equation}\label{eq:soliton}
	\Rc(\bar{g}) + \mathcal{L}_X\bar{g} + \frac{\lambda}{2} \bar{g} = 0
\end{equation}
for some $X\in C^{\infty}(TM)$ and $\lambda \in \mathbb{R}$,
then there exists $\varphi_t\in \operatorname{Diff}(M)$ and $c:[0, T]\to (0, \infty)$ with
$\varphi_T = Id$ and $c(T) = 1$ such that 
\begin{equation}\label{eq:selfsim}
	g(t) = c(t)\varphi_t^{*}(\bar{g})
\end{equation}
on $M\times [0, T]$. 
\end{theorem}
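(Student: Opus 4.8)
\emph{Approach.} The plan is to deduce Theorem~\ref{thm:soliton} from the backwards--uniqueness Theorem~\ref{thm:rfbu}. I will write down explicitly, from the soliton data $(X,\lambda)$ and the metric $\bar g$, the ``canonical form'' self--similar solution issuing from $\bar g$, verify that it is a complete solution of bounded curvature agreeing with $g$ at time $T$, and conclude that $g$ coincides with it on all of $M\times[0,T]$.

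\emph{Construction of the candidate.} Put $c(t)\defn 1+\lambda(t-T)$, so $c(T)=1$ and $\dot c\equiv\lambda$, and note that \eqref{eq:soliton} is equivalent to $-2\Rc(\bar g)=\lambda\bar g+2\mathcal{L}_{X}\bar g$. Since $\bar g=g(T)$ is complete with $|\Rm|_{\bar g}$ bounded, $\mathcal{L}_X\bar g=-2\Rc(\bar g)-\lambda\bar g$ is a bounded symmetric $2$--tensor; hence along any integral curve $\gamma$ of $X$ one has $\frac{d}{ds}|X|_{\bar g}^2(\gamma(s))=(\mathcal{L}_X\bar g)(X,X)\le C|X|_{\bar g}^2$, so $|X|_{\bar g}$ grows at most exponentially and, $(M,\bar g)$ being complete, the flow of $X$ exists for all time. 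On any subinterval of $[0,T]$ on which $c$ is bounded below by a positive constant the time--dependent field $V_t\defn \frac{2}{c(t)}X$ then generates a smooth family $\varphi_t\in\operatorname{Diff}(M)$ with $\varphi_T=\operatorname{Id}$, and using $\pdt(\varphi_t^{*}\bar g)=\varphi_t^{*}(\mathcal{L}_{V_t}\bar g)$, the scale invariance of $\Rc$, and $\dot c=\lambda$, one checks directly that $\gt(t)\defn c(t)\varphi_t^{*}\bar g$ solves \eqref{eq:rf} with $\gt(T)=\bar g$.

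\emph{Applying backwards uniqueness.} Each $\gt(t)$ is a positive multiple of a diffeomorphic pullback of the complete metric $\bar g$, hence complete, and $\sup_M|\Rmt|_{\gt(t)}=c(t)^{-1}\sup_M|\Rm|_{\bar g}$, uniformly bounded whenever $c$ is bounded below by a positive constant. If $\lambda\le 0$ then $c\ge 1$ on $[0,T]$; if $\lambda>0$ and $T<\lambda^{-1}$ then $c\ge 1-\lambda T>0$ on $[0,T]$; in either case $\gt$ is a complete bounded--curvature solution on $M\times[0,T]$ with $\gt(T)=\bar g=g(T)$, so Theorem~\ref{thm:rfbu} gives $g(t)=c(t)\varphi_t^{*}\bar g$ on $M\times[0,T]$, which is \eqref{eq:selfsim}. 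It remains to treat $\lambda>0$, $T\ge\lambda^{-1}$, when $\gt$ exists as a complete bounded--curvature solution only on $(a,T]$ with $a\defn T-\lambda^{-1}\ge 0$: applying Theorem~\ref{thm:rfbu} on each $[a+\varepsilon,T]$ forces $g\equiv\gt$ on $(a,T]$, whence $\sup_M|\Rm|_{g(t)}=c(t)^{-1}\sup_M|\Rm|_{\bar g}\to\infty$ as $t\downarrow a$ unless $\bar g$ is flat --- contradicting $|\Rm|_g\le K$ on $[0,T]$. Thus $\bar g$ is flat, and comparing $g$ with the stationary solution $\bar g$ via Theorem~\ref{thm:rfbu} yields $g\equiv\bar g$ on $[0,T]$, so \eqref{eq:selfsim} holds with $c\equiv 1$, $\varphi_t\equiv\operatorname{Id}$.

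\emph{Main obstacle.} The substantive ingredient is Theorem~\ref{thm:rfbu} itself, which is assumed. Within this deduction the points needing care are the completeness of the (possibly non--gradient) soliton field $X$, handled above via the exponential bound on $|X|_{\bar g}$ coming from the boundedness of $\mathcal{L}_X\bar g$, and the fact that the canonical self--similar solution need not extend back to $t=0$ when $\lambda>0$ --- which, as above, is reconciled with the a priori curvature bound on $g$ by forcing $\bar g$ to be flat in the offending range. The verification that $\gt$ solves \eqref{eq:rf} I expect to be routine, modulo careful bookkeeping of the pullback and Lie--derivative conventions.
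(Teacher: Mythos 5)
Your proof is correct and follows essentially the same route as the paper's. You use the same normalization $c(t)=1+\lambda(t-T)$, the same construction of $\varphi_t$ by integrating $\frac{2}{c(t)}X$, and the same scaling of curvature to derive a contradiction when $c(t)\to 0$ inside $[0,T]$. The one piece the paper isolates as a separate statement — Lemma~\ref{lem:complete}, establishing completeness of $X$ from the bounded Ricci curvature — you have inlined verbatim via the ODE $\frac{d}{ds}|X|^2=(\mathcal{L}_X\bar g)(X,X)\le C|X|^2$ along integral curves. The only organizational difference is that the paper disposes of the flat case first and then assumes $\bar g$ is not flat, whereas you run the construction unconditionally and extract flatness as the only escape from the curvature blow-up in the sub-case $\lambda>0$, $T\ge\lambda^{-1}$; both orderings are logically equivalent and the contradiction argument ($\sup_M|\Rm|_{g(t)}=c(t)^{-1}\sup_M|\Rm|_{\bar g}\to\infty$ as $t\downarrow a$, versus the paper's pointwise version at a single $x_0$ with $\Rmb(x_0)\neq 0$) is the same in substance.
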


Equation \eqref{eq:selfsim} describes solutions which move solely under the actions of $\mathbb{R}_{+}$ and 
$\operatorname{Diff}(M)$, both symmetries of the equation \eqref{eq:rf}.  These \emph{self-similar} solutions
or \emph{Ricci solitons} are ubiquitous in the analysis of the long-time behavior of solutions,
arising, in many important cases, as renormalized limits about developing singularities and modeling 
the local geometry of highly-curved regions of the flow. They also frequently occur as critical cases
of inequalities and monotone formulas which hold for general solutions, most notably, perhaps, in Hamilton's differential
Harnack inequality \cite{H3} and Perelman's entropy formula \cite{P}.  As equation \eqref{eq:soliton} generalizes
the Einstein condition, Theorem \ref{thm:soliton} implies, in particular, that a solution cannot become Einstein 
in finite time.

Backwards-uniqueness and unique-continuation properties of solutions to parabolic equations and inequalities 
have been the objects of consistent study for at least half a century, beginning with the classical 
work of Mizohata \cite{M}, Yamabe \cite{Y}, Lees-Protter \cite{LP}, Agmon-Nirenberg\cite{AN}, and Landis-Oleinik
\cite{LO}, and extending through the work of Saut-Scheurer \cite{SS}, Lin \cite{L}, Sogge \cite{So}, Poon \cite{Po}, 
Escauriaza-Sverak-Veregin \cite{ESV}, and Escauriaza-Kenig-Ponce-Vega \cite{EKPV} over the past two decades. 
Whereas modern efforts have considerably weakened
the assumptions on the growth and regularity of the solution and the coefficients of the equation, 
due in in part to existing estimates on the derivatives of the
curvature tensor for solutions to \eqref{eq:rf}, we will need little of this recent innovation for our purposes.
In fact, the estimates we will ultimately prove are modelled on the classical
argument in Lees-Protter \cite{LP}.  The crucial point of this paper
is to reduce the question of backwards-uniqueness for the weakly parabolic system \eqref{eq:rf} 
to one for a pair of coupled differential inequalities amenable to these more-or-less standard techniques.  
The chief obstacle to overcome is that the usual means of converting
\eqref{eq:rf} to an equivalent parabolic system, namely the method of DeTurck, leads to an ill-posed problem
in application to backwards evolutions.

To highlight the problem with this conventional approach, let us briefly recall Hamilton's prescription \cite{H3} 
to prove uniqueness via DeTurck's method. (We will assume for simplicity that $M$ is compact.)
One begins with two solutions $g(t)$ and $\gt(t)$ to \eqref{eq:rf} with $g(0)=\gt(0)$, fixes
a background metric $\gb$, and solves two \emph{harmonic map heat flows} for diffeomorphisms
$\phi_t$, $\tilde{\phi}_t: M\to M$:
\begin{equation}\label{eq:hhf}
	\pd{\phi}{t} = \Delta_{g(t), \gb}\phi,\quad 
	\pd{\tilde{\phi}}{t} = \Delta_{\gt(t), \gb} \tilde{\phi}, \quad \phi_0 = \tilde{\phi}_0 = Id.
\end{equation} 
Here $\Delta_{g, \gb}$ represents the map Laplacian with respect to the domain metric $g$ and the target metric $\gb$.
The metrics $h(t) \df (\phi_t^{-1})^{*}g(t)$ and $\tilde{h}(t)\df (\tilde{\phi}_t^{-1})^*\gt(t)$, can then be seen to solve
the \emph{Ricci-DeTurck flow} relative to the background metric $\gb$; for $h$, this is 
\[
	\pdt h = -2\Rc(h(t)) + \mathcal{L}_{W[h,\gb]}h(t), 
	\quad\mbox{where}\quad W^k = h^{ij}\left(\Gamma[h]^k_{ij} - \Gamma[\gb]_{ij}^k\right), 
\]
and similarly for $\tilde{h}$.  The equations for $h(t)$ and $\tilde{h}(t)$ are strictly parabolic,
and since $h(0) = \tilde{h}(0)$, one can conclude from existing theory that $h(t) = \tilde{h}(t)$.

On the other hand, given $h(t)$ and $\tilde{h(t)}$, one can also recover $\phi_t$ and $\tilde{\phi}$ by means of the 
\emph{ordinary} differential equations
\[
	\pd{\phi}{t}(x, t) = -W[h(t), \gb](\phi_t(x), t), \quad\mbox{and}\quad 	
	\pd{\tilde{\phi}}{t}(x, t) = -W[\tilde{h}(t), \gb](\tilde{\phi}_t(x), t),
\]
which depend only on $h$, $\tilde{h}$ and $\gb$.  Thus, from ODE theory, we can conclude
that $\phi_t = \tilde{\phi}_t$, and hence that $g(t) = \phi_t^{*}(h(t)) = \tilde{\phi}_t^*(\tilde{h}(t)) = \tilde{g}(t)$.

The difficulty in applying this approach to the question of backwards uniqueness lies
in the the matter of obtaining from two solutions $g(t)$, $\tilde{g}(t)$ of \eqref{eq:rf} which
agree at some non-initial time $t= T$, two corresponding solutions of 
the Ricci-DeTurck flow $h(t)$, $\tilde{h}(t)$ with $h(T) = \tilde{h}(T)$. According to the scheme above,
we would need to seek solutions to \eqref{eq:hhf} with $\phi(T) = Id = \tilde{\phi}(T)$, i.e., to
solve the corresponding (and ill-posed) \emph{terminal-value} problem for the harmonic map heat flow.
Thus, it appears there is no straight-forward reduction of the question of the 
backwards uniqueness of the Ricci flow to the parabolic question of that for the Ricci-DeTurck flow.

Instead, we adapt an alternative technique of Alexakis \cite{A}, to which we were introduced by the recent
paper of Wong-Yu \cite{WY}. Rather than attempt to eliminate the degeneracy of the equation 
by means of a gauge transformation or otherwise, 
we focus instead on the strictly parabolic equations satisfied by the curvature tensor of a solution and its derivatives.
Of course, the equation satisfied by the difference of the curvature tensors of two solutions
(and of their covariant derivatives) will contain terms involving the lower-order differences of the metrics and connections.  We bundle
these quantities together with the differences of curvature and their derivatives in a larger system of
coupled differential inequalities--  the higher order terms satisfying a parabolic, ``partial differential'' inequality,
and the lower order terms satisfying an ``ordinary differential'' inequality.
For these inequalities -- upper bounds, effectively, for the heat operator applied to the higher-order differences
	and the time-derivative of the lower-order differences -- 
	we prove matching lower bounds in the form of Carleman-type estimates.  Here, and for the subsequent deduction
of backwards-uniqueness from these inequalities, we adapt the argument of \cite{LP} to fit a vector-bundle setting on potentially non-compact
manifolds. 

In \cite{A} (see also \cite{AIK}), Alexakis uses a similar technique to prove a unique continuation theorem 
for the vacuum Einstein equations, effectively reducing the problem to one amenable 
to existing Carleman estimates for hyperbolic equations.  In \cite{WY}, the authors apply Alexakis's method to prove 
a unique continuation property for solutions to a coupled Einstein-scalar field system for Riemannian metrics.  There
the method reduces the problem to a PDE-ODE system amenable to existing Carleman estimates for elliptic equations. 
Our method is an application of Alexakis's strategy to a degenerate parabolic geometric evolution equation.    
We carry out the construction of this associated system of differential inequalities in the next section, 
and prove a backwards-uniqueness theorem for a rather general geometric setting in the section following.  
In the last section, we take up the proofs of Theorems \ref{thm:rfbu}, \ref{thm:isom}, and \ref{thm:soliton} 
and discuss their application. 

\section{Reduction to a PDE-ODE system}

Let $g= g(t)$ and $\gt= \gt(t)$ be complete solutions to the Ricci flow on $M \times [0, T]$, 
and denote by $\del$ and $\delt$ their Levi-Civita connections and by 
$\Rm$ and $\Rmt$, their associated Riemannian curvature tensors.
Introduce the tensor fields $h \defn g - \gt$,   
$A \defn \del - \delt$, $B \defn \nabla A$,
$T \defn \Rm - \Rmt$ and $U \defn \del \Rm - \delt \Rmt$.   Here $A$ is a 
$(2, 1)$-tensor, given in local coordinates by 
$A_{ij}^k = \gam_{ij}^k - \gamt_{ij}^k$.
The strategy, following \cite{A}, \cite{WY}, 
is to group the evolving quantities
$h$, $A$, $B$, $T$, $U$ into two separate components of a coupled PDE-ODE system.  
We let 
\[
\mathcal{X} \defn T^1_3(M) \oplus T^1_4(M) \quad\mbox{and}\quad 
\mathcal{Y} \defn T_2(M)\oplus T_2^1(M)\oplus T^1_3(M),
\]
and group the evolving quantities as $\ve{X}(t)\defn T(t)\oplus U(t)\in \mathcal{X}$, and 
$\ve{Y}(t)\defn h(t)\oplus A(t)\oplus B(t)\in \mathcal{Y}$. The key observation
is that $X$ and $Y$ satisfy the following closed system of differential equalities, relative
to the metrics and connections induced on  $\mathcal{X}$ and $\mathcal{Y}$ by $g(t)$.

\begin{proposition}\label{prop:diffineq}
Suppose $g(t)$ and $\gt(t)$ are complete solutions to \eqref{eq:rf} on $M\times [0, T]$
with $g(T) = \gt(T)$ and
\[
	|\Rm|_{g(t)}\leq K,\quad |\Rmt|_{\gt(t)} \leq \tilde{K}
\]
for some constants $K$, and $\tilde{K}$.
Define the associated sections $\ve{X}$ and $\ve{Y}$ from $g$ and $\gt$
as above.
Then, for any $0 <\delta < T$, there exists a constant $C = C(\delta, K, \tilde{K}, T) >0$
such that
\begin{align}
\label{eq:diffineq1}
  \left|\left(\pdt - \Delta_{g(t)}\right) \ve{X}\right|^2_{g(t)}  
  &\leq  C\left(|\ve{X}|_{g(t)}^2 +| \nabla \ve{X}|_{g(t)}^2 
  +|\ve{Y}|_{g(t)}^2\right)\\
\label{eq:diffineq2}
  \left|\pdt \ve{Y}\right|_{g(t)}^2\ &\leq C\left(|\ve{X}|_{g(t)}^2 
  + |\nabla\ve{X}|_{g(t)}^2 + |\ve{Y}|_{g(t)}^2\right)
\end{align}
on $M \times [\delta, T]$.
\end{proposition}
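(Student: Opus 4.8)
The plan is to derive the two differential inequalities directly from the evolution equations for the curvature tensor and its covariant derivatives, carefully tracking the dependence of every term on the pair $(\ve X, \ve Y)$ and their first derivatives. First I would recall the standard evolution equation $\pdt \Rm = \Delta_{g}\Rm + \Rm * \Rm$ (with $*$ denoting a contraction using the metric, in the schematic notation of Hamilton), and the corresponding equation $\pdt \Rmt = \Delta_{\gt}\Rmt + \Rmt * \Rmt$ for the second solution, as well as the analogous equations for $\del\Rm$ and $\delt\Rmt$. Subtracting these, I want to express $(\pdt - \Delta_{g})T$ and $(\pdt - \Delta_g)U$ in terms of $\ve X$, $\del \ve X$, and $\ve Y$. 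The key bookkeeping point is that $\Delta_{\gt}\Rmt = \Delta_g \Rmt + (\text{difference of connections})*\del\Rmt + (\text{difference of Christoffels})*\Rmt$, i.e. the discrepancy between the two rough Laplacians is controlled by $A$ (hence $\ve Y$) times first derivatives of curvature, and by $B$ (hence $\ve Y$) times curvature; since $|\Rm|, |\Rmt|, |\del\Rm|, |\delt\Rmt|$ are all bounded on $[\delta, T]$ by Shi-type estimates (which hold on $[\delta,T]$ for any $\delta>0$ given the curvature bounds on $[0,T]$), every such term is bounded by a constant times $|\ve Y|$ or by a constant times $|\ve X|$. This yields \eqref{eq:diffineq1}.

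For \eqref{eq:diffineq2}, I would write down the time-derivatives of the three lower-order quantities. The metric difference satisfies $\pdt h = \pdt g - \pdt \gt = -2\Rc(g) + 2\Rc(\gt) = -2(\Rc - \Rct)$, and $\Rc - \Rct$ is a metric contraction of $T$ plus terms of the form $(g^{-1} - \gt^{-1})*\Rmt$, the latter being $h*\Rmt$, so $|\pdt h| \le C(|\ve X| + |\ve Y|)$. For $A = \gam - \gamt$, one differentiates the Christoffel symbols: $\pdt \gam^k_{ij}$ is a known expression linear in $\del \Rc$ (schematically $g^{-1}*\del \Rc$), so $\pdt A = \pdt \gam - \pdt \gamt$ involves $\del \Rc - \delt \Rct$, which after commuting derivatives past the metric discrepancy becomes a combination of $U$, of $A*\Rm$, and of $h*\del\Rmt$-type terms — all bounded by $C(|\ve X| + |\ve Y|)$. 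Finally $\pdt B = \pdt(\nabla A)$; here one must be careful that $\del$ itself is $t$-dependent, so $\pdt(\del A) = \del(\pdt A) + (\pdt \gam)*A$, and since $\pdt A$ is itself estimated above (and one needs $\del$ of that estimate, which brings in $\del \ve X$ and $\del \ve Y$), the only genuinely new ingredient is that $\del \ve Y$ does not appear on the right-hand side of \eqref{eq:diffineq2}. This is the reason $B = \del A$ was included as a separate unknown in $\ve Y$: differentiating the $\pdt A$ identity produces $\del A = B$ (already in $\ve Y$) and $\del h$, and $\del h$ must be re-expressed — but in fact $\del h$ is controlled by $A$ since $\del_i g_{jk} - \del_i \gt_{jk} = \del_i g_{jk} - (\delt_i \gt_{jk} + A * \gt) = -A*\gt - A*g$ using $\del g = 0 = \delt \gt$, so $|\del h| \le C|\ve Y|$. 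This closure — that every derivative of a $\ve Y$-component that arises is again expressible through $\ve Y$ and $\ve X$ without cost — is the crux of the computation.

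The main obstacle I anticipate is precisely this closure bookkeeping: ensuring that no term of the form $\del \ve Y$ or $\del^2 \ve X$ survives on the right-hand sides, and that $\del\ve X$ appears only in \eqref{eq:diffineq1} (where it is allowed) and in \eqref{eq:diffineq2} (also allowed), but never $\del B$ or higher. Concretely, the danger points are: (i) the term $\del\Rm - \delt\Rmt$ that appears in $\pdt A$ — one must recognize this as exactly $U = \ve X$-component plus connection-difference corrections, not as something requiring a further derivative; (ii) the rough-Laplacian discrepancy in the $\ve X$-equation, where $(\del - \delt)$ acting twice could a priori produce $\del B$, but the second application lands on curvature, producing $B * \Rm$-type terms that are $O(|\ve Y|)$; and (iii) keeping all constants uniform, which relies on the interior derivative estimates of Shi giving bounds on $|\del^m \Rm|$ and $|\delt^m \Rmt|$ on $M \times [\delta, T]$ depending only on $\delta, K, \tilde K, T$, together with the fact that the two metrics are uniformly equivalent on a time interval bounded away from... well, here one uses that on $[\delta, T]$ the curvature bounds give $\partial_t g$ bounds, hence $g(t)$ is comparable to $g(T)$, and by hypothesis $g(T) = \gt(T)$, so the norms $|\cdot|_{g(t)}$ and $|\cdot|_{\gt(t)}$ are uniformly comparable — this last point is what lets us freely interchange the two metrics when forming the pointwise norms in the stated inequalities. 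Once these three points are handled, \eqref{eq:diffineq1} and \eqref{eq:diffineq2} follow by collecting terms and applying Cauchy–Schwarz.
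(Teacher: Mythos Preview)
Your approach is essentially the one in the paper: derive the schematic evolution equations for $h$, $A$, $B$, $T$, $U$ (the paper collects these in a lemma), invoke Shi's interior estimates on $[\delta, T]$, use $g(T)=\gt(T)$ together with the curvature bounds to make the two families of metrics uniformly equivalent, and observe that $\nabla h$ is expressible in terms of $A$ so that no $\nabla\ve Y$ survives. All of this is correct.

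There is one step you skipped that the paper does carry out. Several of the reaction terms are \emph{quadratic} in $\ve Y$-components, e.g.\ $A\ast A\ast\Rmt$ in the equation for $(\pdt-\Delta)T$ and $A\ast A\ast\gt^{-1}\ast\delt\Rmt$ in the equation for $\pdt B$. After squaring, such a term contributes $C|A|^4$, and to absorb this into $C|\ve Y|^2$ you need an \emph{a priori} uniform bound on $|A|_{g(t)}$ (and similarly on $|B|_{g(t)}$) over $M\times[\delta,T]$---Shi's estimates and the metric equivalence alone do not give this. The paper obtains the bound on $A$ by integrating $\pdt A$ backward from $t=T$, using $A(T)=0$ and the already-established bounds on $\nabla\Rc$ and $\gt^{-1}\ast\delt\Rct$; the bound on $B=\nabla A$ then follows iteratively by the same device. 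Once you insert this step, your argument closes exactly as you describe.
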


\begin{remark}
	In fact, for two solutions $g(t)$, $\gt(t)$ to the Ricci flow, 
	we may omit the term $|\nabla \ve{X}|^2_{g(t)}$ from the left-hand side
	of \eqref{eq:diffineq1}, though, as we will see in the next section,
	its presence presents no additional complication. 
	It is crucial, however, that $\nabla Y$ not appear.
\end{remark}

We postpone the proof of Proposition \ref{prop:diffineq} to the end of the section
and begin by recalling
the evolution equations for quantities attached to a solution of the Ricci flow. 

\begin{lemma}
If $g(t)$ is a solution to \eqref{eq:rf}, then in local coordinates,
\begin{equation}
	\label{eq:evgam}	
	\pdt \Gamma_{ij}^k = -g^{mk}\left\{\nabla_i R_{jm} 
	+ \nabla_j R_{im} - \nabla_{m} R_{ij}\right\}
\end{equation}
\begin{align}\label{eq:evriem}
  \begin{split}
	\pdt R^{l}_{ijk} &= \Delta R^{l}_{ijk} 
	+ g^{pq}\left( R^r_{ijp}R^l_{rqk} - 2R^{r}_{pik}R^l_{jqr} 
	+ 2 R^l_{pir} R^r_{jqk}\right)\\
		&\phantom{=\Delta R^{l}_{ijk} } 
	- g^{pq}\left(R_{ip} R^l_{qjk} + R_{jp} R^l_{iqk} + R_{kp} R_{ijq}^l\right)  
	+ g^{pl}R_{pq} R^q_{ijk}
\end{split}
\end{align}
\begin{align}
\label{eq:evdriem} 	
\begin{split}
	\pdt \nabla_a R_{ijk}^l &= \Delta \nabla_a R_{ijk}^l 
	+ g^{pq}\nabla_a\left( R^r_{ijp}R^l_{rqk} - 2R^{r}_{pik}R^l_{jqr} 
	+ 2 R^l_{pir} R^r_{jqk}\right)\\
	&\phantom{=}
	-g^{pq}\left( R_{ip} \nabla_a R^l_{qjk} 
	- R_{jp} \nabla_aR^l_{iqk} - R_{kp} \nabla_a R_{ijq}^l\right)	 
	+ g^{pl}R_{pq} \nabla_aR^q_{ijk}
\end{split}
\end{align}
	
\end{lemma}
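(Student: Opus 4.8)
\emph{Proof proposal.}
These are the standard evolution equations, and I would derive all three from a single ingredient: the variation formula for the Levi-Civita connection. Since the difference of two connections is a tensor, so is $\pdt\Gamma_{ij}^k$; evaluating it at an arbitrary point $p$ in $g(t)$-normal coordinates centered at $p$ (where $\Gamma = 0$ and $\partial$ agrees with $\nabla$ at $p$) and differentiating the Koszul formula $\Gamma_{ij}^k = \frac{1}{2} g^{kl}(\partial_i g_{jl} + \partial_j g_{il} - \partial_l g_{ij})$ in $t$ — the contribution of $\pdt g^{kl}$ vanishing at $p$ because $\partial g = 0$ there — gives, for a general variation $\pdt g_{ij} = v_{ij}$, the tensorial identity $\pdt\Gamma_{ij}^k = \frac{1}{2} g^{kl}(\nabla_i v_{jl} + \nabla_j v_{il} - \nabla_l v_{ij})$. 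Substituting $v_{ij} = -2R_{ij}$ yields \eqref{eq:evgam}.

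For \eqref{eq:evriem}, I would start from the coordinate expression $R^l_{ijk} = \partial_i\Gamma^l_{jk} - \partial_j\Gamma^l_{ik} + \Gamma^l_{ip}\Gamma^p_{jk} - \Gamma^l_{jp}\Gamma^p_{ik}$, differentiate in $t$, and check by a short computation — the Christoffel factors reassembling into covariant derivatives of the tensor $P^l_{jk} \defn \pdt\Gamma^l_{jk}$ — that $\pdt R^l_{ijk} = \nabla_i P^l_{jk} - \nabla_j P^l_{ik}$. Inserting \eqref{eq:evgam} for $P$, the right-hand side becomes a linear combination of second covariant derivatives of $\Rc$. The core of the argument is then to reorganize this into $\Delta R^l_{ijk}$ plus the displayed quadratic curvature terms: one uses the second Bianchi identity — in the contracted form $\nabla_i R_{jk} - \nabla_j R_{ik} = g^{pq}\nabla_p R_{qijk}$ and its further trace $g^{ij}\nabla_i R_{jk} = \frac{1}{2}\nabla_k R$ — to convert derivatives of $\Rc$ into divergences of $\Rm$, together with the Ricci commutation identity on $\Rm$ to collect the remaining derivatives into a rough Laplacian at the expense of curvature-times-curvature corrections. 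I would organize the bookkeeping exactly as in Hamilton's original derivation \cite{H1} (or as reproduced in \cite{CLN}); keeping the numerical coefficients and index placements straight is the only delicate point here.

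For \eqref{eq:evdriem}, I would differentiate $\nabla_a R^l_{ijk}$ in $t$, noting that $\pdt$ and $\nabla_a$ do \emph{not} commute: for any tensor $W$ one has $\pdt(\nabla_a W) = \nabla_a(\pdt W) + P\ast W$, where the correction is the sum of contractions of $P = \pdt\Gamma$ — equivalently, of $\nabla\Rc$, by \eqref{eq:evgam} — into the indices of $W$. Applying this with $W = \Rm$, substituting \eqref{eq:evriem}, and then commuting $\nabla_a$ past the rough Laplacian — which costs further terms of schematic type $\Rm\ast\nabla\Rm$ via the Ricci identity — produces $\Delta\nabla_a R^l_{ijk}$, the terms $g^{pq}\nabla_a(\cdots)$ obtained by differentiating the quadratic expression in \eqref{eq:evriem}, and a collection of lower-order terms involving $\Rm$ and $\nabla\Rm$. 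The final step is to verify that these lower-order terms cancel down to precisely the Ricci-contracted expression on the right of \eqref{eq:evdriem}, the $\nabla\Rc\ast\Rm$ pieces coming from the $[\pdt,\nabla_a]$ correction being absorbed into $g^{pq}\nabla_a(\cdots)$. I expect this cancellation check, at the level of tracking terms, to be the main obstacle; however, for the application in Proposition \ref{prop:diffineq} only the schematic form ``$\pdt = \Delta + (\text{terms in }\Rm \text{ and } \nabla\Rm)$'' of \eqref{eq:evdriem} is needed, so the exact grouping is inessential downstream.
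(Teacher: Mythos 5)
Your route is the same as the paper's: for \eqref{eq:evgam} and \eqref{eq:evriem} the paper simply cites the standard derivations (e.g.\ \cite{CK}, i.e.\ Hamilton's computation you sketch), and obtains \eqref{eq:evdriem} from the first two by commuting derivatives, exactly as you do. Your additional observation that only the schematic form $\pdt = \Delta + \Rm\ast\nabla\Rm$-type terms matters downstream is consistent with how the lemma is actually used in Proposition \ref{prop:diffineq}, so the proposal is fine.
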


\begin{proof}  
For \eqref{eq:evgam}, \eqref{eq:evriem}, see e.g., \cite{CK}.
Then \eqref{eq:evdriem} follows from the first two and 
a commutation of derivatives.
\end{proof}

The simple observation that the size of the difference of $g^{-1}$ and $\gt^{-1}$
can be controlled by the norm of $h$ will be very useful for us.
Precisely, in local coordinates, we have
\begin{equation}
\label{eq:inv}	\gt^{ij} - g^{ij} =  \gt^{ia}g^{jb}h_{ab}.
\end{equation}
It will also be useful to record the following immediate consequence of compatibility:
\begin{equation}
\label{eq:delh} \nabla_c h_{ab} = A_{ca}^p\gt_{pb} + A_{cb}^p\gt_{ap}.
\end{equation}

Actually, for the rather indelicate inequalities
of Proposition \ref{prop:diffineq}, we will seldom need the precise expressions for 
the evolution equations, and we will adopt the convention 
that $V \ast_g W$  (or simply $V \ast W$) 
represents a linear combination of contractions of the tensors
$V$ and $W$ by the metric $g$.  Thus, for example, equations \eqref{eq:evriem}
and \eqref{eq:evdriem} may be economically expressed as 
\[
	\pdt \Rm = \Delta \Rm + \Rm \ast \Rm, \quad\mbox{ and }\quad
	\pdt \nabla \Rm =  \Delta \nabla \Rm + \Rm \ast \nabla \Rm, 
\]
and equations \eqref{eq:inv} and \eqref{eq:delh}, likewise, by
\[
	\gt^{-1} - g^{-1} = \gt^{-1}\ast h, \quad \mbox{ and } \quad \nabla h = \gt^{-1}\ast A.
\]

According to this convention, we may write the evolutions 
of the components of $\ve{X}$ and $\ve{Y}$ 
as follows.
\begin{lemma}\label{lem:evdiff}
If $g$, $\gt$, $h$, $A$, $B$, $T$, $U$ are as above, then
\begin{align}
\label{eq:hev}
	\pdt h_{ij} &= -2T_{lij}^l, \\
	\begin{split}
\label{eq:aev} 
	\pdt A_{ij}^k &= -g^{mk}\left(U_{ijmp}^p + U_{jimp}^p 
	 - U_{mijp}^p\right)\\ 
   	 &\phantom{=\quad\quad\quad}
	 + g^{kb}\gt^{ma}h_{ab}\left(\delt_{i}\tilde{R}_{jm} 
	 + \delt_{j}\tilde{R}_{im} - \delt_m\tilde{R}_{ij}\right),
 	\end{split}\\ 
	\begin{split}
\label{eq:bev}
 	\pdt B &= \nabla U  + h \ast A \ast \gt^{-1}\ast\delt\widetilde{\operatorname{Rm}}
			    + A \ast \gt^{-1}\ast \delt\widetilde{\operatorname{Rm}}\\
			&\phantom{=}\quad\quad\quad
			+ h \ast \gt^{-1}\ast \delt \delt\widetilde{\operatorname{Rm}}
			+ A \ast U + h \ast A\ast \gt^{-1}\ast\delt\Rmt\\
			&\phantom{=}\quad\quad\quad + A \ast \delt\Rmt + A\ast A\ast\gt^{-1}\ast \delt\Rmt,   
	\end{split}
\end{align}
and
\begin{align}
\begin{split}
\label{eq:tev}
	\left(\pdt - \Delta\right) T &=   h \ast \gt^{-1}\ast\delt\delt\Rmt + A\ast\delt\Rmt  
					+  T\ast \Rmt  + T\ast T,  \\
	&\phantom{=}\quad {}+ A\ast A \ast \Rmt  + B\ast \Rmt + h\ast \gt^{-1} \ast \Rmt \ast \Rmt,
\end{split}\\
\begin{split}
\label{eq:uev}
	\left(\pdt - \Delta\right) U &= 
	 h\ast \gt^{-1} \ast \delt^{(3)}\Rmt + A\ast\delt\delt\Rmt  + A\ast A \ast \delt\Rmt\\
	&\phantom{=}\quad {}+  B\ast \delt \Rmt  +h \ast\gt^{-1}\ast \Rmt \ast \delt \Rmt +  U \ast \Rmt\\
	&\phantom{=}\quad {}+ T\ast \delt \Rmt + T\ast U.
\end{split}
\end{align}
\end{lemma}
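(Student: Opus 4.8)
The goal is to verify the schematic evolution equations of Lemma~\ref{lem:evdiff} for the differences $h$, $A$, $B$, $T$, $U$. Each of these is a routine-but-careful consequence of the evolution equations \eqref{eq:evgam}--\eqref{eq:evdriem} for a single solution, applied to both $g(t)$ and $\gt(t)$ and then subtracted. The first step is to set up the bookkeeping: I would fix local coordinates, record that $\pd{}{t}g^{ij}=2R^{ij}$ and $\pd{}{t}\gt^{ij}=2\widetilde{R}^{ij}$, and note that every time a metric or its inverse is used to raise/lower/contract an index, passing from $\gt$ to $g$ costs a factor schematically of the form $\gt^{-1}\ast h$ by \eqref{eq:inv}. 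The point of the $\ast$-notation is precisely to absorb all such harmless algebraic reshufflings, so the proof is really a matter of tracking \emph{which} of $h,A,B,T,U$ and which curvature factors of the \emph{fixed} solution $\gt$ appear, and with how many derivatives.

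For \eqref{eq:hev}, differentiate $h_{ij}=g_{ij}-\gt_{ij}$ in time, use \eqref{eq:rf} for both metrics, and recognize $R_{ij}-\widetilde R_{ij}$ as a trace of $T$; this gives $-2T^l_{\,lij}$ exactly. For \eqref{eq:aev}, subtract \eqref{eq:evgam} for $g$ from the same for $\gt$. The derivative terms $g^{mk}(\nabla_i R_{jm}+\cdots)$ minus $\gt^{mk}(\delt_i\widetilde R_{jm}+\cdots)$ split, after adding and subtracting $\gt^{mk}(\nabla_i R_{jm}+\cdots)$, into (i) $(g^{mk}-\gt^{mk})$ times a $\delt\Rmt$-type term, which by \eqref{eq:inv} is the $g^{kb}\gt^{ma}h_{ab}(\delt_i\widetilde R_{jm}+\cdots)$ term displayed, plus (ii) $\gt^{mk}$ times $(\nabla_i R_{jm}-\delt_i\widetilde R_{jm}+\cdots)$. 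Term (ii) is where one must be slightly careful: $\nabla_i R_{jm}-\delt_i\widetilde R_{jm}=\delt_i(R_{jm}-\widetilde R_{jm})+(\nabla_i-\delt_i)R_{jm}$, and $\nabla-\delt$ acting on a tensor is contraction with $A$; both pieces are controlled by $U$ (the trace of $\del\Rm-\delt\Rmt$, modulo lower-order $A\ast\Rm$ corrections which, for the schematic statement, get reabsorbed — note $U$ already measures $\del\Rm-\delt\Rmt$, so $\delt(R-\widetilde R)$ and the $A\ast\Rm$ terms together rebuild exactly a trace of $U$). This reproduces the $-g^{mk}(U^p_{\,ijmp}+\cdots)$ term. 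Equation \eqref{eq:bev} then follows by applying $\nabla$ to \eqref{eq:aev}: differentiating $\pd{}{t}A$ and commuting $\nabla$ with $\pd{}{t}$ (which costs a $\pd{}{t}\Gamma\ast A=\Rm\ast A$-type term, absorbed into $A\ast\gt^{-1}\ast\delt\Rmt$ after using \eqref{eq:evgam}), plus the product rule on each factor; every resulting term is a product of one of $h,A,B,U$ with $\nabla U$ or with covariant derivatives of $\Rmt$ up to order two, which is exactly the list in \eqref{eq:bev}.

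For the parabolic equations \eqref{eq:tev} and \eqref{eq:uev}, I would subtract \eqref{eq:evriem} for $g$ from that for $\gt$ (and similarly \eqref{eq:evdriem}), writing everything as $\Delta T + (\text{heat-operator mismatch}) + (\text{quadratic-curvature mismatch})$. The mismatch $\Delta_g\Rm-\Delta_{\gt}\Rmt = \Delta_g T + (\Delta_g-\Delta_{\gt})\Rmt$, and $(\Delta_g-\Delta_{\gt})$ applied to a tensor expands, via \eqref{eq:inv} and the difference-of-connections being $A$, into $h\ast\gt^{-1}\ast\delt\delt\Rmt + A\ast\delt\Rmt + B\ast\Rmt$ — the three lower-order-difference terms visible in \eqref{eq:tev}. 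The quadratic terms $\Rm\ast\Rm-\Rmt\ast\Rmt$ telescope as $T\ast\Rm + \Rmt\ast T$, i.e. $T\ast\Rmt + T\ast T$ after again trading $\Rm$ for $\Rmt$ at the cost of $h\ast\gt^{-1}\ast\Rmt\ast\Rmt$; the remaining $A\ast A\ast\Rmt$ arises from the index-contraction corrections. Equation \eqref{eq:uev} is the same computation one derivative higher, starting from \eqref{eq:evdriem}, producing one more derivative on $\Rmt$ in each lower-order term and the terms $U\ast\Rmt$, $T\ast\delt\Rmt$, $T\ast U$ from the quadratic part.

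\textbf{Main obstacle.} There is no deep difficulty here; the work is entirely organizational. The one place demanding genuine care is the treatment of derivative terms when passing between the two connections in \eqref{eq:aev} and in the Laplacian mismatch for \eqref{eq:tev}--\eqref{eq:uev}: one must consistently use the identity that $(\del-\delt)$ acting on any tensor equals a $\ast$-contraction with $A$, that a \emph{second} difference of connections brings in $B=\nabla A$ (not $\nabla A$ of one but the tensorial difference), and — the subtle point flagged in the Remark — one must check that no term of the form $\nabla\ve{Y}$ (e.g. a bare $\nabla B$, $\nabla A$ beyond what is packaged in $B$, or $\nabla h$ beyond \eqref{eq:delh}) survives on the right-hand side, since that would break the later Carleman argument. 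Verifying this last point amounts to observing that the highest derivative of any metric-difference quantity that appears is always already one of $h,A,B$ itself (with $B$ carrying the top derivative of $A$, and $\del B$ never needed because the PDE is only for $\ve{X}$), and that all genuinely higher derivatives land on the \emph{background} solution $\gt$, where they are controlled by Shi's estimates via the curvature bounds $K,\tilde K$.
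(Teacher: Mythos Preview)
Your approach is essentially the paper's: subtract the evolution equations for $g$ and $\gt$, convert metric-inverse differences via \eqref{eq:inv}, convert connection differences via $\delt W = \nabla W + A\ast W$ (iterated once to handle $\delt\delt - \nabla\nabla$, which produces the $A\ast\delt$, $B$, and $A\ast A$ terms), and telescope the quadratic curvature pieces. Two small bookkeeping corrections are worth noting. For \eqref{eq:aev}, to reproduce the displayed formula exactly you should add and subtract $g^{mk}(\delt_i\tilde R_{jm}+\cdots)$ rather than $\gt^{mk}(\nabla_i R_{jm}+\cdots)$: then the $U$-term carries the factor $g^{mk}$ and the $h$-term carries $\delt\Rmt$ as written, and no further splitting of $\nabla_i R_{jm} - \delt_i\tilde R_{jm}$ is needed since this difference \emph{is} already a trace of $U$. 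For \eqref{eq:tev}, the $A\ast A\ast\Rmt$ term does not arise from ``index-contraction corrections'' in the quadratic curvature part; it comes from the second-order expansion of the Laplacian mismatch, i.e., from the identity $\delt\delt W = \nabla\nabla W + A\ast\delt W + B\ast W + A\ast A\ast W$ applied to $W = \Rmt$.
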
 Here $\Delta = g^{ij}\nabla_i\nabla_j$ represents the Laplacian induced by $g$ and $\nabla$ on the bundles
$T_3^1(M)$ and $T_4^1(M)$. 
\begin{proof}
Equation \eqref{eq:hev} is immediate from the definition of $h$,
and equation \eqref{eq:aev} follows from the evolution of the Christoffel symbols \eqref{eq:evgam} 
in view of \eqref{eq:inv}. From \eqref{eq:aev}, one obtains
\eqref{eq:bev} via
\[
	\pdt \nabla A = \nabla\pd{A}{t} + \pd{\Gamma}{t}\ast A, \quad \nabla_k\gt^{ij} = \gt^{ia}\gt^{jb}\nabla_k h_{ab},
\]
equation \eqref{eq:delh}, and that,  for any tensor $W$,
\[
	\delt W = \nabla W + A \ast W.
\]
For \eqref{eq:tev} and \eqref{eq:uev}, we use the above and  note, similarly, that 
\[ 
	\delt\delt W = \nabla \nabla W + A\ast \delt W + B \ast W + A\ast A\ast \delt W, 
\]
so
\[
	\widetilde{\Delta}\Rmt = \Delta \Rmt +  h \ast\gt^{-1}\ast \delt\delt\Rmt + A \ast\Rmt + B \ast\Rmt 
			+ A\ast A\ast \Rmt,
\]
with a corresponding version for $\widetilde{\Delta}\delt \Rmt$.  Then \eqref{eq:tev} and \eqref{eq:uev}
follow from \eqref{eq:evriem} and \eqref{eq:evdriem}. 
\end{proof}

Now we turn to the proof of the main result of the section.
\begin{proof}[Proof of Proposition \ref{prop:diffineq}]
By the estimates of W.X. Shi \cite{S}, the uniform bounds on $\Rm$ and $\Rmt$
imply that, for all $0 < \delta < T$ and $m \geq 0$, there exist constants 
$C_m = C_m(\delta, K, T)$ and $\tilde{C}_m = \tilde{C}_m(\delta, \tilde{K}, T)$ such that
\begin{equation}
\label{eq:shi}
   |\del^{(m)}\Rm|_{g(t)} \leq C_m, \quad\mbox{ and }\quad |\delt^{(m)}\Rmt|_{\gt(t)} \leq \tilde{C}_m
\end{equation}
on $M \times [\delta, T]$.  Moreover, the uniform bounds on $\Rc$ and $\Rct$
imply that the families $\{g(t)\}_{t\in[0, T]}$ and $\{\gt(t)\}_{t\in[0, T]}$ are uniformly equivalent 
and the equality of $g(T)$ and $\gt(T)$ implies that they are mutually so, i.e., that
there is a $\gamma = \gamma(K, \tilde{K}, T)$ such that
\[
	\gamma^{-1}g(x, t) \leq \gt(x, t) \leq \gamma g(x, t)
\]
on $M \times [0, T]$.  Thus we can replace the norms in the second inequality of \eqref{eq:shi} by $|\cdot|_{g(t)}$, 
provided we also replace
$\tilde{C}_m$ by an appropriate adjustment: $C_m \doteqdot \tilde{C}_m C^{\prime}(K, \tilde{K}, T) > 0$.  
It follows that the factors $\gt^{-1}$ and $\delt^{(m)}\Rmt$, $m\geq 0$, that appear in the right-hand side of the
evolution equations in Lemma \ref{lem:evdiff} are bounded with respect to $g(t)$ on $[\delta, T]$.  We claim that
$h$, $A$, $B$, $T$ and $U$ are also bounded on the interval and thus any extra-linear factors of these
quantities that appear in the evolution equations may be absorbed into the bounded coefficients.

The uniform equivalence, of course, implies that $|h|_{g(t)}$ is bounded, and
$|T|_{g(t)} = |\Rm -  \Rmt|_{g(t)}$ and $|U|_{g(t)} = |\nabla \Rm - \delt \Rmt|_{g(t)}$ are bounded by the discussion above.  
It remains to consider $A$ and $B = \nabla A$.

For $A$, using \eqref{eq:evgam} and the uniform equivalence of the $g(t)$, we have, for any $x\in M$,
\begin{align*}
	\left|A^{k}_{ij}(x, t)\right|_{g(t)} &= 
	\left|\gam^k_{ij}(x, t) - \gamt^k_{ij}(x, t)\right|_{g(t)}\\
	&\leq C^{\prime\prime}\int_t^T\left(|\nabla\Rc(x, s)|_{g(t)} +|\gt^{-1}(x, s)\ast\delt\Rct(x, s)|_{g(t)}\right)\,ds\\
	&\leq C^{\prime\prime\prime}T,  
\end{align*}
since $A(T) = 0$. Proceeding similarly, (and iteratively), we may bound $B$, and the higher derivatives $\nabla^{(m)} A$. 

The inequalities
\eqref{eq:diffineq1} and \eqref{eq:diffineq2} then follow from Lemma \ref{lem:evdiff}  and Cauchy-Schwarz.
\end{proof}

\section{Carleman inequalities and a general backwards-uniqueness theorem}
In this section, we prove a general backwards-uniqueness
theorem for time-dependent sections of vector bundles satisfying differential inequalities of the form \eqref{eq:diffineq1}, \eqref{eq:diffineq2}.  
As before, $M$ will denote, a smooth manifold of dimension $n$, equipped with a smooth family 
$\{g(t)\}_{t\in \mathcal{I}}$ of complete Riemannian metrics with Levi-Civita connections $\nabla = \nabla_{g(t)}$.
We will write $\pdt g= b$, and $B = \trace_g(b)$, so that 
$\pdt d\mu_g = (B/2) d\mu_g$, and
\begin{equation}\label{eq:evgamma}
	\pdt \Gamma_{ij}^k = \frac{1}{2}g^{mk}\left(\nabla_ib_{jm} + \nabla_j b_{im} - \nabla_m b_{ij}\right).
\end{equation} 

For simplicity, we will formulate our results for 
time-dependent sections of the tensor bundles $T^k_l(M)$ equipped with 
the metric and connection induced from $g$ and $\nabla$, although there is, of course,
an analogous statement for sections of general vector bundles equipped with families 
of metrics and compatible connections. 

In our setting, there is no harm in making the mild
abuse of notation of using $g$   
and $b$ to represent also the the induced metrics and their time-derivatives.  
Thus, for $V$, $W\in T_2^1(M)$, for example,
we will write
\[
 g(V,W) = \langle V, W\rangle = g^{ia}g^{jb}g_{ck}V_{ij}^k W_{ab}^c,
\]
and
\[
 b(V, W) = (g^{ia}g^{jb}b_{ck}- b^{ia}g^{jb}g_{ck} - g^{ia}b^{jb}g_{ck})V_{ij}^k W_{ab}^c.
\]  

Also, throughout the section, we will use $\Lambda\in C^{\infty}(S^2(M)\times {\mathcal{I}})$,
to denote a symmetric, positive-definite family of $(2, 0)$ tensors and
define from it the operators
\[
	\Box \df \Lambda^{ij}\nabla_i\nabla_j \quad\mbox{ and }\quad L\df \pdt - \Box.
\]

Our main objective is to prove the following theorem.

\begin{theorem}\label{thm:bu}
Let $\mathcal{X}$ and $\mathcal{Y}$ be finite direct sums of the bundles $T^k_l(M)$,
and 
$X \in C^{\infty}(\mathcal{X}\times[0, T])$, $Y\in C^{\infty}(\mathcal{Y}\times[0, T])$\
be smooth families of sections.
Assume that there exist positive constants $P$, $Q$, $\alpha_1$, and $\alpha_2$,  
such that
\begin{equation}\label{eq:pq}
	|b|_{g(t)}^2 + |\nabla b|_{g(t)}^2 \leq P,\qquad 
	\left|\pdt\Lambda\right|_{g(t)}^2 + |\nabla \Lambda|_{g(t)}^2 \leq Q, 
\end{equation}
and
\begin{equation}\label{eq:ell}
	\alpha_1 g^{ij}(x, t) \leq \Lambda^{ij}(x, t) \leq \alpha_2 g^{ij}(x, t),
\end{equation}
on $M\times[0, T]$ and that the metrics $g(t)$ are complete and satisfy 
\begin{equation}\label{eq:burcgrwth}
	\Rc(g(t)) \geq -K g(t).
\end{equation}
for some $K \geq 0$.
Further assume that the sections $X$, $Y$, obey the growth condition 
\begin{equation}\label{eq:bugrwth}
	\left|X(x,t)\right|_{g(t)}^2 + \left|(\nabla X)(x,t)\right|_{g(t)}^2 + \left|Y(x, t)\right|_{g(t)}^2 
	\leq A e^{a d_{g(t)}(x_0, x)},
\end{equation}
for $a$, $A > 0$ and some fixed $x_0\in M$,
as well as the system of differential inequalities
\begin{align}
\label{eq:pdeineq}
\left|LX\right|_{g(t)}^2 &\leq C \left(|X|_{g(t)}^2 + |\nabla X|_{g(t)}^2 + |Y|_{g(t)}^2\right),\\
\label{eq:odeineq}
\left|\pd{Y}{t}\right|_{g(t)}^2 &\leq C\left(|X|_{g(t)}^2 + |\nabla X|^2_{g(t)} + |Y|_{g(t)}^2\right)
\end{align}
for some $C \geq 0$.
Then $X(\cdot, T)\equiv 0$, $Y(\cdot, T) \equiv 0$ implies $X \equiv 0$, $Y\equiv 0$ on
$M\times [0, T]$. 
\end{theorem}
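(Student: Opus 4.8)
The plan is to prove Theorem~\ref{thm:bu} by establishing two Carleman-type estimates -- one adapted to the parabolic inequality \eqref{eq:pdeineq} and one to the ordinary-differential inequality \eqref{eq:odeineq} -- and then combining them to force $X$ and $Y$ to vanish backwards in time from $t = T$. First I would reduce to a local-in-time statement: since the hypotheses are translation-invariant in $t$ up to constants, it suffices to show that if $X$, $Y$ vanish at $t = T$ then they vanish on $M \times [T-\tau, T]$ for some $\tau = \tau(C, P, Q, \alpha_1, \alpha_2)$ depending only on the structural constants, after which a continuity/open-closed argument propagates the conclusion back to $t = 0$. It is convenient to re-center time so that the interval in question is $[0, \tau]$ with the vanishing data prescribed at the \emph{initial} time $t = 0$; this puts us in the familiar ``forward uniqueness for a backward-type problem'' posture of Lees--Protter \cite{LP}.

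The core is the weighted energy estimate. Following \cite{LP}, I would introduce a weight of the form $e^{-2\sigma(t)\phi}$ (or a product weight $\theta(t)^{\mu} e^{-c|x|^2/t}$-type construction on non-compact $M$, cut off away from $x_0$ using the growth bound \eqref{eq:bugrwth} and the lower Ricci bound \eqref{eq:burcgrwth} to control the volume growth and the error from the cutoff), and compute $\frac{d}{dt}$ of the weighted $L^2$-norms of $X$, $\nabla X$, and $Y$. The operator $L = \pdt - \Box$ with $\Box = \Lambda^{ij}\nabla_i\nabla_j$ is uniformly parabolic by \eqref{eq:ell}, and the bounds \eqref{eq:pq} on $b = \pdt g$, $\nabla b$, and on $\pdt\Lambda$, $\nabla\Lambda$ control all the commutator terms that arise when differentiating the induced metric and connection on $\mathcal{X}$ in time (using \eqref{eq:evgamma}) and when commuting $\nabla$ past $\Box$ to get an equation for $\nabla X$. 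The Carleman inequality I want is a differential inequality of the shape
\[
	\frac{d}{dt}\left(\text{weighted energy of }X,\nabla X\right) \;\geq\; 2\sigma'(t)\,\bigl(\text{weighted energy}\bigr) \;-\; C_*\bigl(\text{weighted energy of }X,\nabla X,Y\bigr) \;-\; \|LX\|_w^2,
\]
obtained by the classical trick of splitting the weighted operator into symmetric and antisymmetric parts and discarding a manifestly nonnegative term; the choice of a convex, rapidly-varying $\sigma(t)$ makes the good term $\sigma'(t)$ dominate the bad constant $C_*$ on a short enough interval. A parallel but easier computation handles $Y$ using only \eqref{eq:odeineq}: $\frac{d}{dt}\|Y\|_w^2$ is bounded below by $2\sigma'\|Y\|_w^2$ minus $C$ times the weighted energies of $X$, $\nabla X$, $Y$. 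Adding the two estimates and feeding in \eqref{eq:pdeineq} and \eqref{eq:odeineq} to absorb $\|LX\|_w^2$ and $\|\pdt Y\|_w^2$, one gets a single Gronwall-type inequality for the total weighted energy $E(t)$ of the form $E'(t) \geq (2\sigma'(t) - C')E(t)$; choosing $\sigma$ so that $2\sigma' - C' > 0$ and noting $E(0) = 0$ forces $E(t) \equiv 0$, hence $X \equiv 0$, $\nabla X \equiv 0$, $Y \equiv 0$ on the short interval.

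The main obstacle, and where most of the work will go, is making the energy argument legitimate on a \emph{non-compact} manifold: the integrations by parts that produce the key nonnegative term require either compact support or enough decay, and $X$, $Y$ are only assumed to grow at the controlled exponential rate \eqref{eq:bugrwth}. The remedy is to run the estimate with a spatial cutoff $\chi_R$ supported on $B_{g(t)}(x_0, 2R)$, track the commutator terms $[\Box, \chi_R]$ and $|\nabla\chi_R|$, and show -- using \eqref{eq:bugrwth} together with the lower Ricci bound \eqref{eq:burcgrwth} (which via Bishop--Gromov bounds the volume of geodesic balls, so the Gaussian-in-space factor of the weight beats the exponential growth of $X$, $Y$ and the polynomial volume factor) -- that these boundary errors tend to $0$ as $R \to \infty$. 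One must also verify that the metrics $g(t)$ staying uniformly equivalent (a consequence of $|b|_g$ being bounded) lets one use a single reference distance function and a single notion of completeness throughout $[0,\tau]$, and that the weight $\sigma(t)$ can be chosen uniformly in the re-centering so the final open-closed propagation back to $t=0$ goes through with a fixed step size. Apart from these analytic subtleties the computation is the standard Lees--Protter scheme transplanted to a vector bundle with a time-dependent metric, and the differential inequalities \eqref{eq:pdeineq}, \eqref{eq:odeineq} are exactly in the form that scheme consumes.
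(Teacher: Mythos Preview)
Your high-level architecture --- reduce to a short time interval, prove a parabolic Carleman estimate for $X$ and an ODE-type estimate for $Y$, add them, and handle non-compactness by a spatial cutoff whose error is killed by a decaying weight and the volume-growth bound --- is exactly the paper's strategy. But the mechanism by which you \emph{close} the argument is wrong as written, and this is not a cosmetic slip: the inequality
\[
	E'(t) \;\geq\; \bigl(2\sigma'(t) - C'\bigr)\,E(t), \qquad E(0) = 0, \qquad 2\sigma' - C' > 0,
\]
does \emph{not} force $E \equiv 0$. (Take $E(t) = t$ on $[0, 1/(2\sigma'-C')]$.) Gronwall in this direction only gives $E(t) \geq 0$, which is vacuous. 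Backward uniqueness is never a consequence of a single differential inequality of this type; the content of the Lees--Protter argument is an \emph{integrated} Carleman inequality with a free parameter that is later sent to infinity. Concretely, the paper takes the weight $\lambda(t)^{-m} = (\tau + \eta - t)^{-m}$ with $m \in \mathbb{N}$, proves for all large $m$ an estimate of the shape
\[
	\rho_m \,\|\lambda^{-m} LX\|^2 \;+\; m^{-2}\,\Bigl\|\lambda^{-m}\,\pdt Y\Bigr\|^2
	\;\gtrsim\; \|\lambda^{-(m+1)} X\|^2 + \|\lambda^{-m}\nabla X\|^2 + \|\lambda^{-(m+1)} Y\|^2,
\]
feeds in \eqref{eq:pdeineq}--\eqref{eq:odeineq} to absorb the left side over $[t_2,\tau]$ into the right, and is left with the contribution of a time-cutoff supported on $[t_1,t_2]$. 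This yields
\[
	\|W\|^2_{M_{t_3,\tau}} \;\leq\; \sigma_m\left(\frac{\tau - t_3 + \eta}{\tau - t_2 + \eta}\right)^{2m}\!\!\|\mathcal{L}\tilde W\|^2_{M_{t_1,t_2}},
\]
and since the ratio in parentheses is strictly less than $1$, sending $m\to\infty$ gives the vanishing. The large parameter $m$ is the essential ingredient you are missing; a single fixed $\sigma$, however convex, cannot do this job.

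Two smaller points. First, under $\Rc(g(t)) \geq -Kg(t)$ the Bishop--Gromov bound gives \emph{exponential}, not polynomial, volume growth; your sentence about the ``polynomial volume factor'' is incorrect, though a sufficiently strong spatial weight still wins. Second, the paper uses an exponential spatial weight $\varphi_a(x) = e^{-a\,d_{g(0)}(x_0,x)}$ rather than a Gaussian, precisely so that $|\nabla G|^2/G \leq C a^2 G$ holds with a constant independent of $x$; with a Gaussian $e^{-cr^2}$ one has $|\nabla G|^2/G \sim r^2 G$, which would not be absorbable into the Carleman estimate as organized. So when you carry out the cutoff step, use an exponential weight matched to the exponential growth in \eqref{eq:bugrwth} and the exponential volume bound, not a Gaussian.
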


\begin{remark}
	It should be noted that Theorem \ref{thm:bu} is stated for application to a rather general geometric setting, and
	the growth conditions \eqref{eq:bugrwth} are less than optimal
	in many particular cases. For example, for the standard heat equation on $\mathbb{R}^n$
	(with $L = \pdt - \Delta$, $X(x,t) = u(x,t)$ and $Y(x,t) = 0$) the optimal condition
	is $|u(x, t)| \leq A e^{a|x|^2}$ on $\mathbb{R}^{n}$.  In fact, in \cite{EKPV}, it is shown that
	if this condition holds on $\mathbb{R}^{n}\times[0, T]$  and $u(x,T) \leq C_ke^{-k|x|^2}$ for all $k\geq 1$,
	then $u \equiv 0$.  We expect that the result of Theorem \ref{thm:bu} should admit
	similar improvements, even in its general setting.	
	
	Nevertheless, since the principal objects to which we apply Theorem \ref{thm:bu}
are the differences of the curvature tensors and their derivatives, the above theorem in its present form
will allow us to achieve the result of Theorem \ref{thm:rfbu} under reasonably weak conditions.
As the expression 
\[
		R_{a\bar{b}} = - \frac{\partial^2}{\partial z^a\partial\bar{z}^b}\log \det(g_{i\bar{j}}),
\]
for the Ricci curvature of a K\"{a}hler metric shows, the assumption of bounded curvature tensor
is, in a sense, analogous to the optimal growth rate for the corresponding result for the heat equation
on $\mathbb{R}^n$. 
\end{remark}

Our proof of Theorem \ref{thm:bu} will rely on Carleman-type estimates on 
members of the following subbundles.

\begin{definition}
Let $k_i$, $l_i$, $i=1,\ldots, N$ be non-negative integers,
$E \df \bigoplus_{i}T^{k_i}_{l_i}(M)$, and $\tau > 0$.
We will say that a family of sections $V\in C^{\infty}(E \times [0, \tau])$
belongs to $\mathcal{V} \doteqdot \mathcal{V}_M^{0, \tau}(E)$
if $V(\cdot , 0) \equiv 0 \equiv V(\cdot, \tau)$.
\end{definition}

Following \cite{LP}, we will use as kernel in our estimates 
integral powers of  
\[	
	\lambda(t)\df \lambda_{\tau,\eta}(t) \df \tau + \eta - t
\]
for $\tau$, $\eta > 0$.  For $\Omega \subset M$, 
we will write $\Omega_{\tau_1, \tau_2}\df \Omega \times [\tau_1, \tau_2]$ and
\[
	(V, W)_{\Omega_{\tau_1, \tau_2}}\df \dint{\Omega_{\tau_1, \tau_2}}\langle V, W\rangle_{g(t)}\,d\mu_{g(t)}\,dt,\quad
\|V\|_{\Omega_{\tau_1,\tau_2 }}^2 \df (V, V)_{\Omega_{\tau_1, \tau_2}}.
\]

\subsection{Local estimates}

We begin with local versions of the estimates on a precompact open set $\Omega\subseteq M$, 
working throughout on a fixed bundle $E = \bigoplus_{j=1}^{j=N}T^{k_j}_{l_j}(M)$ and suppressing
the dependency of the constants on the ranks $(k_j, l_j)$ as well as the dimension $n$.

The first estimate, a lower bound corresponding to the ODE portion \eqref{eq:odeineq} of our system 
of inequalities, is entirely elementary.

\begin{lemma}\label{lem:odecarleman}
For any $P_0 > 0$, there exist positive $T_1 = T_1(P_0)$ and $\eta_1 = \eta_1(P_0)$
such that if $0 < \tau < T_1$, $0 < \eta < \eta_1$, $V\in \mathcal{V}_{\Omega}^{0, \tau}(E)$,
and $g(t)$ is a smooth family of metrics on $\Omega\times[0, \tau]$
with $b = \pdt g$ as above satisfying $|b|^2 \leq P_0$ on $\Omega_{0, \tau}$,
then
\begin{equation}
\label{eq:ode}
\left\|\lambda^{-m}\sqrt{G}\pd{V}{t}\right\|^2_{\Omega_{0, \tau}}
	\geq m^2\left\|\lambda^{-(m+1)}\sqrt{G}V\right\|^2_{\Omega_{0, \tau}}
\end{equation}
for any non-negative $G\in C_c^{\infty}(\Omega)$ and $m\in \mathbb{N}$
\end{lemma}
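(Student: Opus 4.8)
The plan is to prove \eqref{eq:ode} by a direct integration-by-parts argument in the time variable, treating the spatial weight $\sqrt{G}$ as an inert (time-independent) multiplier. First I would reduce to the scalar-like computation: since $V$ is a section of a bundle carrying a time-dependent metric, expanding $\pd{V}{t}$ introduces Christoffel-type corrections only in the covariant sense, but the pointwise norm $|\partial_t V|^2_{g(t)}$ versus $|V|^2_{g(t)}$ comparison is governed purely by the bound on $b$; so I would first write out $\partial_t |V|^2_{g(t)} = 2\langle \partial_t V, V\rangle_{g(t)} + b(V,V)$ and $\partial_t d\mu_{g(t)} = (B/2)\,d\mu_{g(t)}$, absorbing the $b$-terms into error contributions controlled by $P_0$.

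The core step is the elementary one-dimensional inequality behind \eqref{eq:ode}. Working on the region $\Omega_{0,\tau}$ with $\lambda = \tau + \eta - t$, I would consider the quantity $\int \lambda^{-2m} G |\partial_t V|^2 \, d\mu\, dt$ and expand the ``square'' $\int \lambda^{-2m} G |\partial_t V - (\text{multiple of } \lambda^{-1} V)|^2$ — more precisely, integrate by parts in $t$ in the cross term $-2m\int \lambda^{-2m-1} G \langle \partial_t V, V\rangle\, d\mu\, dt$. The boundary terms at $t=0$ and $t=\tau$ vanish because $V \in \mathcal{V}_\Omega^{0,\tau}(E)$, and differentiating $\lambda^{-2m-1}$ produces the favorable term $+(2m+1)\cdot m\int \lambda^{-2m-2} G |V|^2\, d\mu\, dt$; the leftover terms come from differentiating $d\mu_{g(t)}$ and the metric in $|V|^2_{g(t)}$, each carrying a factor of $b$ hence bounded by $C P_0 \int \lambda^{-2m-2} G |V|^2\, d\mu\, dt$ (note $\lambda^{-1} \leq \eta^{-1}$ is \emph{not} used to absorb — rather, $\lambda$ is bounded \emph{below}, so these are lower-order in the sense of one fewer power of $\lambda^{-1}$, and I would instead use $\lambda \le \tau+\eta$ to convert them). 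Completing the square yields $\|\lambda^{-m}\sqrt{G}\partial_t V\|^2 \ge m(2m+1)\|\lambda^{-(m+1)}\sqrt{G}V\|^2 - C(P_0)(\tau+\eta)\|\lambda^{-m}\sqrt{G}V\|^2$, and then using $\lambda^{-m} \le (\tau+\eta)\lambda^{-(m+1)}$ the error term is $\le C(P_0)(\tau+\eta)^2 \|\lambda^{-(m+1)}\sqrt{G}V\|^2$.

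Finally I would choose the thresholds: pick $T_1(P_0)$ and $\eta_1(P_0)$ small enough that $C(P_0)(\tau+\eta)^2 \le m(2m+1) - m^2 = m^2 + m$ for all $\tau < T_1$, $\eta < \eta_1$ — in fact it suffices to make $C(P_0)(\tau+\eta)^2 \le 1 \le m^2+m$ uniformly in $m \in \mathbb{N}$, since the right side is smallest (equal to $2$) at $m=1$, so $C(P_0)(T_1+\eta_1)^2 \le 1$ works. This leaves $\|\lambda^{-m}\sqrt{G}\partial_t V\|^2 \ge m^2 \|\lambda^{-(m+1)}\sqrt{G}V\|^2$, which is \eqref{eq:ode}.

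I expect the main (though still modest) obstacle to be bookkeeping the metric-dependence carefully: making sure that every place the time-derivative hits $g(t)$ — in the inner product defining $|V|^2$, in the volume form, and in any identification of $\partial_t V$ with its components — produces only terms linear in $b$, so that the entire error is genuinely $O(P_0)$ with no hidden dependence on $V$ beyond $\|\lambda^{-(m+1)}\sqrt{G}V\|$, and no dependence on $\Omega$, the bundle ranks, or $G$ (since $G$ is time-independent it commutes with $\partial_t$ and never gets differentiated). The cancellation of the boundary terms is immediate from the definition of $\mathcal{V}$, and no spatial integration by parts is needed at all, which is why the lemma is ``entirely elementary.''
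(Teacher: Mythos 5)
Your proposal is correct and follows essentially the same approach as the paper: the paper packages the computation via the auxiliary section $Z = \lambda^{-m}V$, but the underlying steps --- completing the square in the time direction, integrating by parts in $t$ with boundary terms killed by $V \in \mathcal{V}_\Omega^{0,\tau}(E)$, and absorbing the $b$- and $B$-errors via $\lambda \leq \tau + \eta$ --- are identical. The only bookkeeping slip is that your displayed intermediate inequality drops the $-m^2\|\lambda^{-(m+1)}\sqrt{G}V\|^2$ term that the square-completion produces (and the error term should carry an extra factor of $m$, which would actually make the threshold condition linear rather than quadratic in $\tau+\eta$), but you implicitly restore the $m^2$ in the final threshold step, so the conclusion is unaffected.
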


\begin{proof}
Suppose $\tau$, $\eta > 0$, and $V \in \mathcal{V}^{0, \tau}_{\Omega}(E)$.
Let $\lambda =\lambda_{\tau, \eta}$ as before and fix $m\in \mathbb{N}$.  
Then $Z \doteqdot \lambda^{-m} V\in {\mathcal{V}}^{0, \tau}_{\Omega}(E)$ 
also, and
\[
 \pd{Z}{t} = m\lambda^{-(m+1)} V + \lambda^{-m}\pd{V}{t},
\]
so
\begin{equation}\label{eq:zident}
  \left|\lambda^{-m}\pd{V}{t}\right|^2 = \left|\pd{Z}{t}\right|^2 
	- 2m \left\langle \lambda^{-1}Z, \pd{Z}{t}\right\rangle 
	+ m^2\left|\frac{Z}{\lambda}\right|^2,
\end{equation}
	and
\begin{align}
\begin{split}
\label{eq:odetemp1}
  -2m \left\langle \frac{Z}{\lambda}, 
  \pd{Z}{t}\right\rangle G\,d\mu 
   &= -\pdt \left(m\lambda^{-1}|Z|^2  G d\mu \right)\\
&\phantom{=}\quad
    +m\left( |\lambda^{-1} Z|^2 +  \lambda^{-1}b(Z, Z) + \lambda^{-1}
    \frac{B}{2}|Z|^2\right)G\,d\mu.
\end{split}
\end{align}
Since $\lambda^{-1}Z = \lambda^{-m-1}V$, and $\lambda(t) \leq \tau + \eta$,
we have
\begin{equation}\label{eq:odetemp2}
 |\lambda^{-1} Z|^2 +  \lambda^{-1}b(Z, Z) + \lambda^{-1}
    \frac{B}{2}|Z|^2 \geq (1 - C^{\prime}(\tau + \eta))\left|\lambda^{-m-1}V\right|^2.  
\end{equation}
for some $C^{\prime} = C^{\prime}(P_0) > 0$
Combining \eqref{eq:odetemp1} and \eqref{eq:odetemp2}
with \eqref{eq:zident}, we obtain
\[
\left\|\lambda^{-m}\sqrt{G}\pd{V}{t}\right\|^2_{\Omega_{0, \tau}}
	\geq m^2\left(1 + \frac{1 - C^{\prime}(\tau + \eta)}{m}\right) 
	\left\|\lambda^{-(m+1)}\sqrt{G}V\right\|^2_{\Omega_{0, \tau}}
\]
upon integration.  Thus choosing $T_1 + \eta_1 < 1/ C^{\prime}$, we may ensure
\eqref{eq:ode} for $\tau < T_1$, $\eta < \eta_1$.   
\end{proof}

For the proof of the next two lemmas, we follow the basic outline
of the argument in \cite{LP}, making adjustments for the 
vector-bundle setting, the time-dependency of the metric, connection,  and measure,
and the cut-off function $G$.

\begin{lemma}\label{lem:pde1}
For any $\alpha_2$, $P_0$, $Q_0 > 0$, there  exist positive constants $C_1$, $T_2$, and $\eta_2$
depending only on this data such that if $0 < \tau < T_2$, 
and $g(t)$, $\Lambda(t)$
satisfy
\[
 \Lambda^{ij} \leq \alpha_2 g^{ij},\quad	|b|_{g(t)}^2 + |\nabla b|_{g(t)}^2 \leq P_0, \quad 
	\left|\pd{\Lambda}{t}\right|_{g(t)}^2 + \left|\nabla \Lambda\right|_{g(t)}^2  \leq Q_0,
\]
on $\Omega_{0,\tau}$, then
\begin{align}
\label{eq:pde1}
\begin{split}
	&\|\lambda^{-m}\sqrt{G}L V\|_{\Omega_{0, \tau}}^2 
		+ C_1\|\lambda^{-m}\sqrt{G}\nabla V\|_{\Omega_{0, \tau}}^2\quad\\ 
	&\qquad\qquad\qquad\geq 
	\frac{m}{2}
		\|\lambda^{-(m+1)}\sqrt{G}V\|^2
	- C_1\!\!\!\dint{\supp{G}\times[0, \tau]}\!\!\!\lambda^{-2m}|\nabla V|^2\frac{|\nabla G|^2}{G}\,d\mu\,dt
\end{split}
\end{align}
for any $m\in \mathbb{N}$, $V \in \mathcal{V}_{\Omega}^{0, \tau}(E)$, $0 < \eta < \eta_2$, and non-negative 
$G\in C_c^{\infty}(\Omega)$.
\end{lemma}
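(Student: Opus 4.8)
The plan is to follow the classical Lees--Protter strategy: substitute $Z \doteqdot \lambda^{-m}V$, expand $\|\sqrt{G}LZ\|^2$ (or rather the operator applied to the conjugated section), and integrate by parts to extract a positive ``commutator'' term that dominates $\|\lambda^{-(m+1)}\sqrt{G}V\|^2$. First I would set $Z = \lambda^{-m}V \in \mathcal{V}^{0,\tau}_\Omega(E)$ and compute $LZ = \lambda^{-m}LV + m\lambda^{-(m+1)}Z$ (using $\partial_t\lambda = -1$ and that $\Box$ does not see $\lambda(t)$), so that
\[
  \lambda^{-2m}|LV|^2 = |LZ|^2 - 2m\lambda^{-1}\langle Z, LZ\rangle + m^2\lambda^{-2}|Z|^2,
\]
and the whole game is to bound below the cross term $-2m\lambda^{-1}\langle Z, LZ\rangle$ after multiplying by $G\,d\mu$ and integrating. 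I would write $L = \partial_t - \Box$ and split $-2m\lambda^{-1}\langle Z, LZ\rangle G\,d\mu$ into the $\partial_t$-piece and the $\Box$-piece. The $\partial_t$-piece is handled exactly as in Lemma \ref{lem:odecarleman}: it produces $-\partial_t(m\lambda^{-1}|Z|^2 G\,d\mu)$, which integrates to zero since $Z$ vanishes at $t=0,\tau$, plus $m(|\lambda^{-1}Z|^2 + \lambda^{-1}b(Z,Z) + \lambda^{-1}\tfrac{B}{2}|Z|^2)G\,d\mu$, whose bad terms are controlled by $C'(\tau+\eta)|\lambda^{-(m+1)}V|^2$ once $|b|^2\le P_0$.

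Next I would treat the elliptic piece $2m\lambda^{-1}\langle Z,\Box Z\rangle G\,d\mu$. Integrating by parts in space (legitimate since $G$ has compact support in $\Omega$) gives
\[
  \int 2m\lambda^{-1}\langle Z,\Box Z\rangle G\,d\mu
  = -\int 2m\lambda^{-1}\Lambda^{ij}\langle\nabla_i Z,\nabla_j Z\rangle G\,d\mu
  - \int 2m\lambda^{-1}\Lambda^{ij}\langle Z,\nabla_j Z\rangle\nabla_i G\,d\mu,
\]
modulo a term with $\nabla_i\Lambda^{ij}$ which is $Q_0$-controlled. The first integral on the right is $\le 0$ by ellipticity $\Lambda^{ij}\le\alpha_2 g^{ij}$ combined with positivity of $G$, so it only helps; the second, the cut-off error, is estimated by Cauchy--Schwarz as $\varepsilon\lambda^{-1}|\nabla Z|^2 G + \varepsilon^{-1}\lambda^{-1}|Z|^2|\nabla G|^2/G$ — the first of these is absorbed into the good gradient term provided we are willing to pay the coefficient $C_1$ on the left side of \eqref{eq:pde1}, and the second, after unwinding $Z = \lambda^{-m}V$ and controlling the contribution of $\nabla(\lambda^{-m})$ trivially, becomes the $\supp G \times [0,\tau]$ error term on the right of \eqref{eq:pde1}. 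Converting $|\nabla Z|^2$ back to $|\nabla V|^2$ costs only lower-order terms in $|V|^2$ (from $\lambda^{-m}$ differentiated), absorbable into the dominant $\tfrac{m}{2}\|\lambda^{-(m+1)}\sqrt G V\|^2$. Collecting everything, after integration we obtain
\[
  \|\lambda^{-m}\sqrt G LV\|^2 + C_1\|\lambda^{-m}\sqrt G\nabla V\|^2
  \ge m\bigl(1 - C'(\tau+\eta)\bigr)\|\lambda^{-(m+1)}\sqrt G V\|^2 - C_1\!\!\dint{\supp G\times[0,\tau]}\!\!\lambda^{-2m}|\nabla V|^2\frac{|\nabla G|^2}{G}\,d\mu\,dt,
\]
and choosing $T_2, \eta_2$ small so that $C'(\tau+\eta)\le \tfrac12$ for $\tau<T_2$, $\eta<\eta_2$ yields \eqref{eq:pde1}.

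The main obstacle, and the point requiring genuine care rather than bookkeeping, is the interplay between the cut-off function $G$ and the positivity extraction: in the unweighted Lees--Protter argument one integrates by parts freely, but here the factor $G$ (only $C_c^\infty$, not constant) generates the boundary-type error $\int \lambda^{-1}\Lambda^{ij}\langle Z,\nabla_j Z\rangle\nabla_i G\,d\mu$, and one must split it so that the $|\nabla Z|^2$-part is genuinely absorbable — this is why the term $C_1\|\lambda^{-m}\sqrt G\nabla V\|^2$ must be permitted on the left and why the residual error carries the weight $|\nabla G|^2/G$ (which forces $G$ to be chosen later as a square, e.g. $G = \phi^2$, so that $|\nabla G|^2/G = 4|\nabla\phi|^2$ stays bounded). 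A secondary subtlety is keeping track of the time-dependence of $g$, $\nabla$, and $d\mu$ when integrating by parts and when differentiating $|Z|^2$ in $t$; these contribute the $b$- and $B$-terms, all of which are $P_0$-controlled and hence absorbed via the smallness of $\tau+\eta$, exactly as in Lemma \ref{lem:odecarleman}. Everything else — the algebraic expansion, the Cauchy--Schwarz splittings, the conversion of $Z$-norms to $V$-norms — is routine.
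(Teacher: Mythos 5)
Your overall Lees--Protter framework (substitute $Z=\lambda^{-m}V$, extract a commutator, absorb errors by smallness of $\tau+\eta$) is the right one, and your treatment of the $\partial_t$-piece, the $b$- and $B$-bookkeeping, and the role of the cut-off in forcing the $|\nabla G|^2/G$ weight are all sound. But the algebraic decomposition at the heart of the argument is the wrong one, and it produces a term that cannot be controlled.

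You group $\lambda^{-m}LV = LZ - m\lambda^{-1}Z$, drop $|LZ|^2\geq 0$, and are left to bound $-2m(\lambda^{-1}GZ,LZ)_{\Omega_{0,\tau}}$ from below. Splitting off the elliptic piece $2m(\lambda^{-1}GZ,\Box Z)$ and integrating by parts in space, the leading contribution is
\[
-2m\iint_{\Omega_{0,\tau}}\lambda^{-1}\Lambda^{ij}\langle\nabla_i Z,\nabla_j Z\rangle\,G\,d\mu\,dt.
\]
You assert this term ``only helps'' because it is $\leq 0$, but the sign logic is inverted: a non-positive contribution to a lower bound for $\|\lambda^{-m}\sqrt{G}LV\|^2$ is harmful, not helpful. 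Worse, it carries the factor $m$. Since $\nabla Z=\lambda^{-m}\nabla V$ and $\lambda^{-1}$ ranges over $[(\tau+\eta)^{-1},\eta^{-1}]$, this integral has magnitude up to $\sim m\,\alpha_2\,\eta^{-1}\|\lambda^{-m}\sqrt{G}\nabla V\|^2$, which cannot be absorbed by the permitted left-hand term $C_1\|\lambda^{-m}\sqrt{G}\nabla V\|^2$ with $C_1$ independent of $m$ and $\eta$. The cut-off error $-2m\iint\lambda^{-1}\Lambda^{ij}\langle Z,\nabla_j Z\rangle\nabla_iG\,d\mu\,dt$ and the $\nabla\Lambda$-error suffer from the same fatal factor of $m$; your Cauchy--Schwarz estimate silently drops it. (You may be conflating this lemma with Lemma \ref{lem:pde2}, where the inner product $(\lambda^{-(m+1)}V,\lambda^{-m+1}LV)$ is estimated directly and the Dirichlet term there really does appear with a favorable sign.)

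The paper's route avoids this entirely by using the alternative grouping $\lambda^{-m}LV = \partial_t Z - (\Box Z + m\lambda^{-1}Z)$, expanding and dropping $|\Box Z + m\lambda^{-1}Z|^2\geq 0$. The elliptic cross term is then $-2\langle\Box Z,\partial_t Z\rangle$ with \emph{no} factor of $m$, and the essential extra ingredient is an integration by parts in \emph{time} as well as space: the spatial IBP brings out $\partial_t\bigl(\Lambda^{ab}\langle\nabla_a Z,\nabla_b Z\rangle G\,d\mu\bigr)$, which integrates to zero because $Z\in\mathcal{V}^{0,\tau}_\Omega(E)$, together with a $-|\partial_t Z|^2G$ contribution that cancels the retained $|\partial_t Z|^2 G$ from the expansion. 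All remaining errors are $O(|Z|^2+|\nabla Z|^2)$ with $m$-independent constants, and the only $m$-dependence comes from the retained cross term $-2m\langle\lambda^{-1}Z,\partial_t Z\rangle$ handled as in Lemma \ref{lem:odecarleman}. It is precisely the pairing of $\Box Z$ with $m\lambda^{-1}Z$ rather than with $\partial_t Z$ that sinks your version; as written, the proposed proof does not close.
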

\begin{proof}
Fix $\tau > 0$, and let $V\in \mathcal{V}_{\Omega}^{0, \tau}(E)$. Define $Z = \lambda^{-m}V$ as before. 
We have
\[
  \lambda^{-m} LV   = LZ   -m\lambda^{-1}Z = \pd{Z}{t} -\Box Z - m \lambda^{-1}Z.
\]
So,
\begin{align*}
\begin{split}
	|\lambda^{-m}LV|^2 &= \left|\pd{Z}{t}\right|^2 + \left|\Box Z + m \lambda^{-1}Z\right|^2
	- 2\llangle \Box Z, \pd{Z}{t}\rrangle - 2m\llangle \lambda^{-1}Z, \pd{Z}{t}\rrangle,  
\end{split}
\end{align*}
and 
\begin{align}\label{eq:pde1main}
\begin{split}
	&\|\lambda^{-m}\sqrt{G}LV\|_{\Omega_{0,\tau}}^2 \\
	&\qquad\qquad\geq 
	\left\|\sqrt{G}\pd{Z}{t}\right\|_{\Omega_{0, \tau}}^2 
	- 2\left(G\Box Z, \pd{Z}{t}\right)_{\Omega_{0, \tau}} - 2m \left(G\lambda^{-1}Z, \pd{Z}{t}\right)_{\Omega_{0, \tau}}.
\end{split}
\end{align}
We now proceed to estimate the integrands in the last two terms in \eqref{eq:pde1main} from below.
First, we have the identity
\begin{align*}
-2\left\langle \Box Z, \pd{Z}{t}\right\rangle &= -2\nabla_a\left(\Lambda^{ab}\left\langle \nabla_b Z, \pd{Z}{t}\right\rangle\right)
		+2\nabla_a\Lambda^{ab}\llangle \nabla_b Z, \pd{Z}{t}\rrangle	
	+2\Lambda^{ab}\left\langle \nabla_b Z, \nabla_a \pd{Z}{t}\right\rangle\\
\begin{split}
&= -2\nabla_a\left(\Lambda^{ab}\llangle\nabla_b Z, \pd{Z}{t}\rrangle\right)	+2\nabla_a\Lambda^{ab}\llangle \nabla_b Z, \pd{Z}{t}\rrangle
 +2\Lambda^{ab}\left\langle\nabla_b Z, \left[\nabla_a, \pdt\right]Z\right\rangle\\
  &\phantom{=}\qquad\qquad+\pdt\left(\Lambda^{ab}\langle \nabla_a Z, \nabla_b Z\rangle\right) 
- \pd{\Lambda^{ab}}{t}\langle\nabla_a Z, \nabla_b Z\rangle - \Lambda^{ab}b(\nabla_a Z, \nabla_b Z). 
\end{split}
\end{align*}
Thus,
\begin{align}
\label{eq:pde1pen1}
\begin{split}
&\!\!\!\!\!\!\!\!\!\!\!\!\!\!-2\left\langle \Box Z, \pd{Z}{t}\right\rangle G\,d\mu_g 
= -2 \nabla_a\left(\Lambda^{ab}\left\langle\nabla_b Z, \pd{Z}{t}\right\rangle G\right)\,d\mu_g\\
&\qquad{}+\pdt\left[\Lambda^{ab}\langle \nabla_a Z, \nabla_b Z\rangle G\,d\mu_g\right] 
	+ \left\{2\Lambda^{ab}\left\langle \nabla_b Z, \left[\nabla_a, \pdt\right] Z\right\rangle\right.\\
&\qquad\left.{} - \Lambda^{ab}b(\nabla_a Z, \nabla_b Z) 
	 - \pd{\Lambda^{ab}}{t}\langle \nabla_a Z, \nabla_b Z\rangle
	-\frac{B}{2}\Lambda^{ab}\langle \nabla_a Z, \nabla_b Z\rangle\right\} G\, d\mu_g\\
&\qquad\qquad\qquad{} 
	+ 2\left(\Lambda^{ab}\nabla_a G + G\nabla_a\Lambda^{ab}\right)\left\langle \nabla_b Z, \pd{Z}{t}\right\rangle \, d\mu_g.
\end{split}
\end{align}
Now, in view of \eqref{eq:evgamma},
\[
	\left|\left[\nabla, \pdt\right] Z\right| \leq C^{\prime}|\nabla b||Z| \leq C^{\prime}\sqrt{P_0} |Z|, 
\]
so, using Cauchy's inequality we may estimate the quantity in brackets in \eqref{eq:pde1pen1} by
\[
	\big\{\cdots\big\} \geq  - C^{\prime\prime}\left(|Z|^2 + |\nabla Z|^2\right)
\]
for $C^{\prime\prime} = C^{\prime\prime}(\alpha_2, P_0, Q_0) > 0$.
Similarly, we estimate the factor in the last term of \eqref{eq:pde1pen1} as
\begin{align*}
	2(G\nabla_a\Lambda^{ab} + \Lambda^{ab}\nabla_a G)\left\langle \nabla_b Z, \pd{Z}{t}\right\rangle 
	&\geq - \left|\pd{Z}{t}\right|^2 G 
		-2\left(|\nabla\Lambda|^2G  + |\Lambda|^2\frac{|\nabla G|^2}{G}\right)|\nabla Z|^2\\
&\geq - \left|\pd{Z}{t}\right|^2 G - 2\left(Q_0 G +  n\alpha_2^2\frac{|\nabla G|^2}{G}\right)|\nabla Z|^2	
\end{align*}
on $\supp{G}\times[0, \tau]$.
Taking this into account when integrating \eqref{eq:pde1pen1} over $\Omega_{0, \tau}$,
we obtain
\begin{align}
\begin{split}
\label{eq:pde1pen2}
	-2\left( G\Box Z, \pd{Z}{t} \right)_{\Omega_{0, \tau}} &\geq
		-\left\|\sqrt{G}\pd{Z}{t}\right\|^2_{\Omega_{0, \tau}} 
		-C^{\prime\prime\prime} \left(\|\sqrt{G}{Z}\|_{\Omega_{0, \tau}}^2
			+ \|\sqrt{G}\nabla Z\|_{\Omega_{0, \tau}}^2\right)\\ 
			&\qquad\phantom{\geq}{}-C^{\prime\prime\prime} \!\!\!
		\dint{\supp{G}\times[0, \tau]}\!\!\!\frac{|\nabla G|^2}{G}|\nabla Z|^2\,d\mu_g\,dt	
\end{split}	
\end{align}
for $C^{\prime\prime\prime} = C^{\prime\prime\prime}(\alpha_2, P_0, Q_0)$.

For the last term in \eqref{eq:pde1main}, we rearrange terms as in Lemma \ref{lem:odecarleman}
to obtain
\begin{align*}
\begin{split}
-2m\left\langle \lambda^{-1}Z, \pd{Z}{t}\right\rangle G\, d\mu_g 
	&= -m\pdt\left(\lambda^{-1}|Z|^2 G \, d\mu_g\right)\\
	&\quad + m\lambda^{-2} \left(\left(1  + \lambda \frac{B}{2}\right)|Z|^2 + \lambda b(Z, Z)\right) G\, d\mu_g. 
\end{split}
\end{align*}
Integrating over $\Omega_{0, \tau}$, and using $0\leq \lambda(t) \leq \tau + \eta$, we have
\begin{align}\label{eq:pde1last1}
\begin{split}
 -2m\left(\lambda^{-1}G Z, \pd{Z}{t}\right)_{\Omega_{0, \tau}} &\geq
	m\left(1 - C^{\prime\prime\prime\prime}(\tau + \eta)\right) \|\lambda^{-1}\sqrt{G}Z\|^2_{\Omega_{0, \tau}}, 	
\end{split}
\end{align}
where $C^{\prime\prime\prime\prime} = C^{\prime\prime\prime\prime}(P_0)$.

Inserting \eqref{eq:pde1pen2} and \eqref{eq:pde1last1} into \eqref{eq:pde1main}
and using $|Z|^2 \leq(\tau + \eta)^2|\lambda^{-(m+1)}V|^2$, we have
\begin{align*}
\begin{split}
	&\|\lambda^{-m}\sqrt{G}L V\|_{\Omega_{0, \tau}}^2 
		+ C^{\prime\prime\prime}\|\lambda^{-m}\sqrt{G}\nabla V\|_{\Omega_{0, \tau}}^2\quad\\ 
	&\qquad\qquad\qquad\geq 
	m\left(1 - C^{\prime\prime\prime\prime}(\tau + \eta) - C^{\prime\prime\prime}\frac{(\tau + \eta)^2}{m}\right)
		\|\lambda^{-(m+1)}\sqrt{G}V\|^2  \\
&\phantom{\geq}\qquad\qquad\qquad\qquad 
	- C^{\prime\prime\prime}\!\!\!\dint{\supp{G}\times[0, \tau]}\!\!\!\lambda^{-2m}|\nabla V|^2\frac{|\nabla G|^2}{G}\,d\mu\,dt.
\end{split}
\end{align*}
Finally, choosing $T_2$ and $\eta_2$ so that  
\[
	 C^{\prime\prime\prime\prime}(T_2 + \eta_2) + C^{\prime\prime\prime}(T_2 + \eta_2)^2 < \frac{1}{2},
\]
we have \eqref{eq:pde1}.
\end{proof}

With this result in hand, we turn to the key lemma.  
\begin{lemma}\label{lem:pde2}
	For any positive $\alpha_1$, $\alpha_2$, $P_0$, and $Q_0$,
	there exist constants $C_2$, $m_1$, $T_3$, $\eta_4$ depending only on
	this data, such that if $0 < \tau < T_3$,
	and $g(t)$, $\Lambda(t)$ satisfy
\[
	\alpha_1 g^{ij}(x,t) \leq \Lambda^{ij} \leq \alpha_2 g^{ij}(x,t),
\]
and
\[
	|b|^2_{g(t)} + |\nabla b|^2_{g(t)} \leq P_0, \qquad\quad 
	\left|\pd{\Lambda}{t}\right|^2 + |\nabla \Lambda|^2_{g(t)} \leq Q_0,
\] 
on $\Omega_{0, \tau}$, then
\begin{align}\label{eq:pde2}
\begin{split}
	&\rho_m(\tau, \eta)\|\lambda^{-m}\sqrt{G}LV\|_{\Omega_{0, \tau}}^2 
		\geq \|\lambda^{-(m+1)}\sqrt{G}V\|_{\Omega_{0, \tau}}^2
		+\frac{1}{2}\|\lambda^{-m}\sqrt{G}\nabla V\|_{\Omega_{0, \tau}}^2\\
	&\qquad\qquad \phantom{=}{}- \rho_m(\tau, \eta)\!\!\!\dint{\supp G\times[0, \tau]}\!\!\! 
		\frac{|\nabla G|^2}{G}\left(|\lambda^{-m}V|^2 + |\lambda^{-(m+1)}\nabla V|^2\right)\,d\mu\,dt
\end{split}
\end{align}
 for any $V\in \mathcal{V}^{0, \tau}_{\Omega}(E)$, $m \geq m_1$, $\eta < \eta_3$, and non-negative
$G\in C^{\infty}_{c}(\Omega)$,
where
\[
	\rho_m(\tau, \eta) \df C_2\left(\frac{1}{m} +  (\tau + \eta) + \left(\frac{m+1}{m}\right)(\tau + \eta)^2\right).
\]
\end{lemma}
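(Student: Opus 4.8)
The plan is to run a standard Carleman/energy argument on the rescaled section $Z \df \lambda^{-m}V$, the one-step improvement on Lemma \ref{lem:pde1} being that we now integrate by parts against the \emph{spatial} gradient term in order to produce a good positive $\|\lambda^{-(m+1)}\sqrt{G}V\|^2$ term without having to absorb it into $\|LV\|^2$ with a large constant. Concretely, I would start from the pointwise identity
\[
	\lambda^{-m}LV = \pd{Z}{t} - \Box Z - m\lambda^{-1}Z,
\]
square it, and this time keep the cross term $-2m\langle \lambda^{-1}Z, \Box Z\rangle$ rather than discarding it. Writing $\Box Z = \Lambda^{ab}\nabla_a\nabla_b Z$ and integrating by parts in space,
\[
	-2m\bigl(\lambda^{-1}G Z, \Box Z\bigr)_{\Omega_{0,\tau}}
	= 2m\bigl(\lambda^{-1}\Lambda^{ab}\nabla_a Z, \nabla_b(GZ)\bigr)_{\Omega_{0,\tau}},
\]
and expanding $\nabla_b(GZ) = G\nabla_b Z + (\nabla_b G)Z$ produces a term $2m\lambda^{-1}\Lambda^{ab}\langle\nabla_a Z,\nabla_b Z\rangle G \geq 2m\alpha_1\lambda^{-1}|\nabla Z|^2 G$ (using the lower ellipticity bound \eqref{eq:ell}) together with a term supported on $\supp G$ carrying a factor $|\nabla G|^2/G$, which accounts for the boundary integral on the right of \eqref{eq:pde2}. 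Combining this with the expansion of $|\pd{Z}{t} - \Box Z - m\lambda^{-1}Z|^2$, discarding the manifestly nonnegative square $|\Box Z + m\lambda^{-1}Z|^2$ and $|\pd{Z}{t}|^2$, and invoking Lemma \ref{lem:odecarleman} (applied with $V$ in place of $V$ and the cutoff $G$) to handle the $-2m\langle\lambda^{-1}Z,\pd{Z}{t}\rangle$ cross term, I would arrive at an inequality of the shape
\[
	\|\lambda^{-m}\sqrt{G}LV\|^2 \;\gtrsim\; m\|\lambda^{-(m+1)}\sqrt{G}V\|^2 + m\|\lambda^{-m}\sqrt{G}\nabla V\|^2 - (\text{error}),
\]
where the error terms are $O(\tau+\eta)$ or $O(1/m)$ relative to the main terms, plus the $|\nabla G|^2/G$ boundary contributions. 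Dividing through by the appropriate power of $m$ and collecting the lower-order coefficients into $\rho_m(\tau,\eta)$ then yields \eqref{eq:pde2}, once $m_1$ is taken large and $T_3,\eta_3$ small enough that the error coefficients are dominated by the main ones.

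The delicate bookkeeping — and the step I expect to be the main obstacle — is controlling the cross term $-2\langle\Box Z,\pd{Z}{t}\rangle$ and the newly retained term $-2m\langle\lambda^{-1}Z,\Box Z\rangle$ simultaneously without the bad $-|\sqrt{G}\pd{Z}{t}|^2$ contributions from the first one overwhelming the $|\pd{Z}{t}|^2$ we started with. Here I would reuse verbatim the identity \eqref{eq:pde1pen1}–\eqref{eq:pde1pen2} from the proof of Lemma \ref{lem:pde1}: that computation already shows $-2(G\Box Z,\pd{Z}{t})_{\Omega_{0,\tau}}$ is bounded below by $-\|\sqrt{G}\pd{Z}{t}\|^2$ minus controlled multiples of $\|\sqrt{G}Z\|^2$, $\|\sqrt{G}\nabla Z\|^2$, and the $|\nabla G|^2/G$ integral, so the $|\pd{Z}{t}|^2$ term is exactly consumed, exactly as in Lemma \ref{lem:pde1}. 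The genuinely new input is the gradient-producing integration by parts above; the time-derivative of the measure (contributing $B/2$ factors via $\pdt d\mu_g = (B/2)d\mu_g$) and of $\Lambda$ and $g$ must be tracked, but all such factors are bounded by $\sqrt{P_0}$ or $\sqrt{Q_0}$ by hypothesis and, being multiplied by $\lambda^{-2m}$ rather than $\lambda^{-2(m+1)}$, or else carrying a factor $\lambda \leq \tau+\eta$, they land among the $O(\tau+\eta)$ or $O(1/m)$ errors and get swept into $\rho_m$.

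Finally, to pass from the intermediate inequality to the stated form \eqref{eq:pde2}, I would use $|Z|^2 = \lambda^{-2m}|V|^2 \leq (\tau+\eta)^2|\lambda^{-(m+1)}V|^2$ and $|\nabla Z|^2 \leq 2\lambda^{-2m}|\nabla V|^2 + 2m^2\lambda^{-2m-2}|\nabla G/G|^2$-type bounds — more precisely, since $Z = \lambda^{-m}V$ and $\lambda$ is spatially constant, $\nabla Z = \lambda^{-m}\nabla V$ exactly, so that conversion is clean — to rewrite everything in terms of $\lambda^{-(m+1)}V$, $\lambda^{-m}\nabla V$, and the boundary integrand $\frac{|\nabla G|^2}{G}(|\lambda^{-m}V|^2 + |\lambda^{-(m+1)}\nabla V|^2)$. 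Dividing by $\rho_m(\tau,\eta)^{-1}$-sized quantities and absorbing constants into $C_2$ completes the argument; the choice of $m_1$ large and $T_3$, $\eta_3$ small is dictated by requiring the coefficient of $\|\lambda^{-(m+1)}\sqrt{G}V\|^2$ to exceed $1$ and that of $\|\lambda^{-m}\sqrt{G}\nabla V\|^2$ to exceed $1/2$ after all absorptions.
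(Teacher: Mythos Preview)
Your proposal has a sign error at the crucial step, and the mechanism you describe for producing the positive gradient term does not work. In the expansion of
\[
	|\lambda^{-m}LV|^2 = \left|\pd{Z}{t}\right|^2 + |\Box Z|^2 + m^2\lambda^{-2}|Z|^2
	- 2\left\langle\pd{Z}{t},\Box Z\right\rangle - 2m\lambda^{-1}\left\langle\pd{Z}{t},Z\right\rangle
	+ 2m\lambda^{-1}\langle\Box Z, Z\rangle,
\]
the last cross term carries a \emph{plus} sign, not the minus sign you wrote. After multiplying by $G$ and integrating by parts in space, it contributes
\[
	2m\bigl(\lambda^{-1}G\,\Box Z, Z\bigr)_{\Omega_{0,\tau}}
	= -2m\bigl(\lambda^{-1}\Lambda^{ab}\nabla_a Z, \nabla_b(GZ)\bigr)_{\Omega_{0,\tau}} + (\text{lower order}),
\]
whose main part is $-2m\alpha_1\|\lambda^{-1/2}\sqrt{G}\,\nabla Z\|^2$, i.e., a large \emph{negative} multiple of the gradient norm. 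Retaining this cross term therefore hurts rather than helps; indeed the three ``new'' terms you extract are exactly $|\Box Z + m\lambda^{-1}Z|^2$, which Lemma~\ref{lem:pde1} already discarded as nonnegative, so expanding it cannot yield any net gain.

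The paper obtains the gradient term by a different device: it pairs $LV$ with $V$ (rather than with $LV$) via the identity
\[
	\langle \lambda^{-(m+1)} V,\ \lambda^{-m+1}LV\rangle
	= \left\langle \lambda^{-2m}V,\ \pd{V}{t}\right\rangle - \langle \lambda^{-2m}V,\ \Box V\rangle,
\]
so that the spatial part $-\langle V,\Box V\rangle$ integrates by parts to $+\alpha_1|\nabla V|^2$ with the correct sign. Estimating the left side above by Cauchy--Schwarz and the time part as in Lemma~\ref{lem:odecarleman} gives an inequality bounding $\|\lambda^{-m}\sqrt{G}\nabla V\|^2$ above by $h_m(\tau,\eta)\|\lambda^{-(m+1)}\sqrt{G}V\|^2 + (\tau+\eta)^2\|\lambda^{-m}\sqrt{G}LV\|^2$ plus boundary terms. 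One then \emph{substitutes} Lemma~\ref{lem:pde1} for the $\|\lambda^{-(m+1)}\sqrt{G}V\|^2$ appearing on the right, absorbs the resulting $\|\nabla V\|^2$ on the left for $m$ large and $\tau+\eta$ small, and adds a further copy of Lemma~\ref{lem:pde1} to recover the $\|V\|^2$ term in \eqref{eq:pde2}. The shape of $\rho_m$ comes directly from the coefficient $h_m/m + (\tau+\eta)^2$ arising in this combination. Your outline should be revised to use this energy pairing in place of the cross-term extraction.
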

\begin{proof}
Fix $\tau > 0$ and let $V\in \mathcal{V}_{\Omega}^{0, \tau}(E)$.  We begin with the identity
\begin{equation}\label{eq:pde2main}
\langle \lambda^{-(m+1)} V, \lambda^{-m+1}LV \rangle
	= \left\langle \lambda^{-2m}V, \pd{V}{t}\right\rangle - \langle \lambda^{-2m}V, \Box V\rangle.
\end{equation}
Multiplying the first term on the right by the volume form and recasting it as 
in the previous two lemmas, we obtain
\begin{align*}
\begin{split}
 \left\langle \lambda^{-2m} V, \pd{V}{t}\right\rangle G\,d\mu_g   &= 
	\pdt\left\{\frac{1}{2}|\lambda^{-m}V|^2 G d\mu_g\right\} -m\lambda|\lambda^{-(m+1)}V|^2 G\, d\mu_g\\
	&\phantom{=}\quad -\frac{\lambda^{-2m}}{2}\left(b(V,V) 
	+ \frac{B}{2}|V|^2\right)G\, d\mu_g.
\end{split}		
\end{align*}
Integrating and  using $\lambda \leq \tau + \eta$, we have
\begin{align}\label{eq:pde2first}
\begin{split}
	\left(\lambda^{-2m} G V, \pd{V}{t}\right)_{\Omega_{0, \tau}} 
	&\geq -\left(m(\tau + \eta) 
	+ C^{\prime}(\tau + \eta)^2\right)\|\lambda^{-(m+1)}\sqrt{G}V\|^2_{\Omega_{0, \tau}}, 
\end{split}	
\end{align}
for $C^{\prime} = C^{\prime}(P_0) > 0.$

Likewise, for the second term on the right side of \eqref{eq:pde2main}, we 
find
\begin{align}\label{eq:pde2last1}
\begin{split}
	&\!\!\!\!\!\!\!\!-\langle\lambda^{-2m}V, \Box V\rangle G \,d\mu_g\\
	&\qquad\qquad={} -\lambda^{-2m}\nabla_a\left(\Lambda^{ab}\langle\nabla_b V, V\rangle G\right)\,d\mu_g
		+ \lambda^{-2m}\Lambda^{ab}\langle \nabla_a V, \nabla_b V\rangle G\, d\mu_g\\
&\qquad\qquad\qquad\qquad {}+\lambda^{-2m}\Lambda^{ab} \nabla_a G\langle V, \nabla_b V\rangle\,d\mu_g
	+ \lambda^{-2m}\nabla_a\Lambda^{ab}\langle V, \nabla_b V\rangle\,d\mu_g
\end{split}
\end{align}
On account of the uniform ellipticity of $\Lambda$, the second term on the right-hand side of \eqref{eq:pde2last1} satisfies
\begin{equation}\label{eq:pde2last4}
	\dint{\Omega_{0,\tau}}\lambda^{-2m}\Lambda^{ab}\langle\nabla_a V, \nabla_b V\rangle G\, d\mu_g\, dt
	\geq \alpha_1\|\lambda^{-m}\sqrt{G}\nabla V\|^2_{\Omega_{0, \tau}}.
\end{equation}
upon integration.
Integrating the third term on the right side of \eqref{eq:pde2last1} and using Cauchy's inequality, we have
\begin{align}\label{eq:pde2last2}
\begin{split}
	&\dint{\Omega_{0, \tau}}\Lambda^{ab}\nabla_a G\langle V, \nabla_b V\rangle\,d\mu_g\,dt \\
	&\qquad\qquad\geq -\frac{\alpha_1}{4}\|\lambda^{-m}\sqrt{G}\nabla V\|_{\Omega_{0, \tau}}^2 
	- n\frac{\alpha_2^2}{\alpha_1}\!\!\!\dint{\supp{G}\times[0, \tau]}\!\!\!\lambda^{-2m}\frac{|\nabla G|^2}{G}|V|^2\,d\mu_g\,dt.
\end{split}
\end{align}
Similarly, for the last term in \eqref{eq:pde2last1},
\begin{align}\label{eq:pde2last3}
\begin{split}
	&(\lambda^{-2m}G V, \nabla_a\Lambda^{ab}\nabla_b V)_{\Omega_{0, \tau}}\\
&\qquad\qquad\geq - \frac{\alpha_1}{4}\|\lambda^{-m}\sqrt{G}\nabla V\|^2_{\Omega_{0, \tau}} 
     -(\tau + \eta)^2\frac{Q_0}{\alpha_1}\|\lambda^{-(m+1)}\sqrt{G}V\|^2_{\Omega_{0, \tau}}.
\end{split}
\end{align}

Then, integrating \eqref{eq:pde2main} over $\Omega_{0, \tau}$ and applying equations \eqref{eq:pde2first},
 \eqref{eq:pde2last1}, and \eqref{eq:pde2last4}, we obtain
\begin{align*}
\begin{split}
 	&(\lambda^{-(m+1)} \sqrt{G}V, \lambda^{-m+1}\sqrt{G}L V)_{\Omega_{0, \tau}}\\
&\qquad\qquad\geq
		\frac{\alpha_1}{2}\|\lambda^{-m}\sqrt{G}\nabla V\|_{\Omega_{0, \tau}}^2
	- \left(m(\tau+\eta) + C^{\prime\prime}(\tau +\eta)^2\right)\|\lambda^{-(m+1)}\sqrt{G}V\|_{\Omega_{0, \tau}}^2\\
&\qquad\qquad\qquad\qquad
		{}- n\frac{\alpha_2^2}{\alpha_1}\!\!\!\dint{\supp{G}\times[0, \tau]}\!\!\!\lambda^{-2m}\frac{|\nabla G|^2}{G}|V|^2\,d\mu_g\,dt,
\end{split}		
\end{align*}
for $C^{\prime\prime} = C^{\prime\prime}(\alpha_1, \alpha_2, P_0, Q_0)>0$.

Estimating the left-hand side of this inequality from above, we have
\begin{align*}
	(\lambda^{-(m+1)} \sqrt{G}V, \lambda^{-m+1}\sqrt{G}L V)_{\Omega_{0, \tau}}
	&\leq \frac{1}{2}\|\lambda^{-(m+1)}\sqrt{G} V\|_{\Omega_{0, \tau}}^2
	 + \frac{1}{2}\|\lambda^{-m+1}\sqrt{G} L V\|_{\Omega_{0, \tau}}^2\\
	&\leq  \frac{1}{2}\|\lambda^{-(m+1)}\sqrt{G} V\|_{\Omega_{0, \tau}}^2
	 + \frac{(\tau + \eta)^2}{2}\|\lambda^{-m}\sqrt{G} L V\|_{\Omega_{0, \tau}}^2,
\end{align*}
so, put together, 
\begin{align}\label{eq:pde2lower2}
\begin{split}
	&\frac{\alpha_1}{2}\|\lambda^{-m}\sqrt{G} \nabla V\|_{\Omega_{0,\tau}}^2 \leq h_m(\tau, \eta)
		\|\lambda^{-(m+1)}\sqrt{G}V\|_{\Omega_{0, \tau}}^2 \\
&\qquad {} + \frac{(\tau +\eta)^2}{2}
	\|\lambda^{-m}\sqrt{G}LV\|_{\Omega_{0, \tau}}^2 +  
	n\frac{\alpha_2^2}{\alpha_1}\!\!\!\dint{\supp{G}\times[0,\tau]}\!\!\!\lambda^{-2m}\frac{|\nabla G|^2}{G}|V|^2\,d\mu_g\,dt
\end{split}
\end{align}
where
\[
	h_m(\tau, \eta) \df \frac{1}{2} + m(\tau + \eta) + C^{\prime\prime}(\tau + \eta)^2.
\]

Now we apply Lemma \ref{lem:pde1}, which supplies $C_1$,
$T_2$, $\eta_2$, and  depending on $\alpha_2$, $P_0$, and $Q_0$ such that
\begin{align*}
\begin{split}
& \frac{2}{m} \|\lambda^{-m}\sqrt{G}LV\|_{\Omega_{0, \tau}}^2
	+ \frac{2C_1}{m}\|\lambda^{-m}\sqrt{G}\nabla V\|_{\Omega_{0, \tau}}^2
+ \frac{2C_1}{m}\!\!\!\dint{\supp{G}\times[0, \tau]}\!\!\!\lambda^{-2m}\frac{|\nabla G|^2}{G}|\nabla V|^2\,d\mu\,dt\\
&\qquad\qquad\qquad\qquad\qquad\qquad\qquad\geq
	\|\lambda^{-(m+1)}\sqrt{G} V\|_{\Omega_{0, \tau}}^2
\end{split}
\end{align*}  
for all $m \in \mathbb{N}$, if $\tau < T_2$ and $\eta < \eta_2$.
Inserting this into \eqref{eq:pde2lower2}, we have
\begin{align*}
\begin{split}
 	&\frac{\alpha_1}{2}\|\lambda^{-m} \sqrt{G} \nabla V\|_{\Omega_{0,\tau}}^2\\
&\qquad\qquad \leq \left(\frac{(\tau + \eta)^2}{2} + \frac{2}{m}h_m(\tau, \eta)\right)
	\|\lambda^{-m}\sqrt{G} LV\|_{\Omega_{0, \tau}}^2 +
	\frac{2C_1}{m}h_m(\tau, \eta)\|\lambda^{-m}\sqrt{G}\nabla V\|_{\Omega_{0, \tau}}^2\\
	&\qquad\qquad\qquad + \!\!\!\dint{\supp{G}\times[0, \tau]}\!\!\!\lambda^{-2m}\frac{|\nabla G|^2}{G}
		\left(\frac{2C_1}{m}h_m(\tau, \eta)|\nabla V|^2 + n\frac{\alpha_2^2}{\alpha_1}|V|^2\right)\,d\mu\,dt.
\end{split}
\end{align*}
Choosing $T^{\prime}\leq T_2 $ and $\eta^{\prime} \leq \eta_2$ sufficiently
small, and  $m^{\prime} \in \mathbb{N}$ sufficiently large, to ensure that
\[
	\frac{2C_1}{m^{\prime}}h_{m^{\prime}}(T^{\prime}, \eta^{\prime}) 
	= C_1\left(\frac{1}{m^{\prime}} + 2(T^{\prime} + \eta^{\prime}) 
	+ \frac{2C^{\prime\prime}}{m^{\prime}}(T^{\prime} + \eta^{\prime})^2\right) < 
	\frac{\alpha_1}{4},
\]
we can absorb the second term on the right in the previous inequality into the left-hand side and obtain
\begin{align}\label{eq:pde2lower3}
\begin{split}
 	&\frac{\alpha_1}{4}\|\lambda^{-m} \sqrt{G} \nabla V\|_{\Omega_{0,\tau}}^2
 \leq \left(\frac{(\tau + \eta)^2}{2} + \frac{2}{m}h_m(\tau, \eta)\right)
	\|\lambda^{-m}\sqrt{G} LV\|_{\Omega_{0, \tau}}^2\\
	&\qquad\qquad+{ } \iint\limits_{\supp G \times[0,\tau]}\!\!\!\!\lambda^{-2m}\frac{|\nabla G|^2}{G}
		\left(\frac{2C_1}{m}h_m(\tau, \eta)|\nabla V|^2 + n\frac{\alpha_2^2}{\alpha_1}|V|^2\right)\,d\mu\,dt
\end{split}
\end{align}
for all $\tau \leq T^{\prime}$, $\eta \leq \eta^{\prime}$, and $m \geq m^{\prime}$.

Now, by \eqref{eq:pde1} again, we have
\begin{align*}
\begin{split}
	\frac{\alpha_1}{4}\|\lambda^{-(m+1)}\sqrt{G}V\|_{\Omega_{0, \tau}}^2&\leq
	\frac{\alpha_1C_1}{2m}\|\lambda^{-m}\sqrt{G}\nabla V\|^2_{\Omega_{0, \tau}}
	+ \frac{\alpha_1}{2m}\|\lambda^{-m}\sqrt{G}LV\|_{\Omega_{0, \tau}}^2\\
	&\quad 
		+ \frac{\alpha_1C_1}{2m}\!\!\!\dint{\supp{G}\times[0, \tau]}\!\!\!\lambda^{-2m}
		  \frac{|\nabla G|^2}{G}|\nabla V|^2\, d\mu\,dt,
\end{split}
\end{align*}
so choosing $m^{\prime\prime} \geq m^{\prime}$ large enough to ensure
$C_1/ m^{\prime\prime} \leq 1/4$, and summing the above inequality with \eqref{eq:pde2lower3},
we obtain
\begin{align}\label{eq:pde2fin}
\begin{split}
	&\frac{\alpha_1}{8}\|\lambda^{-m}\sqrt{G}\nabla V\|_{\Omega_{0, \tau}}^2
+\frac{\alpha_1}{4}\|\lambda^{-(m+1)}\sqrt{G}V\|^2_{\Omega_{0, \tau}}\\
&\qquad\leq  \left(\frac{\alpha_1 + 4 h_m(\tau, \eta)}{2m} + \frac{(\tau + \eta)^2}{2} \right)
	\|\lambda^{-m}\sqrt{G} LV\|_{\Omega_{0, \tau}}^2\\
&\qquad\qquad 	
 {}+ \dint{\supp{G}\times[0,\tau]}\!\!\!\lambda^{-2m}\frac{|\nabla G|^2}{G}
		\left(\frac{\alpha_1 +4C_1}{2m}h_m(\tau, \eta)|\nabla V|^2 + n\frac{\alpha_2^2}{\alpha_1}|V|^2\right)\,d\mu\,dt
\end{split}  
\end{align}
for all $m \geq m^{\prime\prime}$, $\tau \leq T^{\prime}$, $\eta \leq \eta^{\prime}$.
Finally, we estimate the last term above by
\begin{align*}
\begin{split}
 &\dint{\supp{G}\times[0,\tau]}\!\!\!\lambda^{-2m}\frac{|\nabla G|^2}{G}
		\left(\frac{\alpha_1 +4C_1}{2m}h_m(\tau, \eta)|\nabla V|^2 + n\frac{\alpha_2^2}{\alpha_1}|V|^2\right)\,d\mu\,dt\\
	&\;\leq \left(\frac{\alpha_1 + 4C_1}{2m}h_m(\tau, \eta) + n\frac{\alpha_2^2}{\alpha_1}(\tau + \eta)^2\right)
		 \!\!\!\dint{\supp{G}\times[0,\tau]}\!\!\!\frac{|\nabla G|^2}{G}
		\left(|\lambda^{-m}\nabla V|^2 + |\lambda^{-(m+1)}V|^2\right)\,d\mu\,dt.
\end{split}
\end{align*}
Thus we may assume that in \eqref{eq:pde2fin}, the coefficients of $\|\lambda^{-m}\sqrt{G}LV\|^2$ and the last integral
have the same basic structure.  
Multiplying both sides of \eqref{eq:pde2fin} by $4/\alpha_1$, and choosing $C_2$ sufficiently large,
we obtain \eqref{eq:pde2}
for $ m \geq m_1 \df m^{\prime\prime}$, $\tau \leq T_3 \df T^{\prime}$, and $\eta \leq \eta_3 \df \eta^{\prime}$.
\end{proof}

\subsection{Global estimates.}

In the proof of Theorem \ref{thm:bu}, we will need global versions of Lemmas \ref{lem:odecarleman}
and \ref{lem:pde2}.  As before, we work on the bundle $E= \bigoplus_{j=1}^N T^{k_j}_{l_j}(M)$.
If $M$ is compact, we can simply take $\Omega = M$, and $G\equiv 1$,
however, in general, we will need to impose further conditions on the growth of the sections
and additional controls on the metrics $g(t)$, and coordinate our application of the Lemmas 
of the preceding section with an appropriate family of 
cut-off functions $G = G_R$.

Specifically, we will assume below that the 
metrics $g(t)$ on $M$ are complete and satisfy
\begin{equation}\label{eq:rcbound}
		\Rc(g(t))\geq -Kg(t)
\end{equation}
for some $K \geq 0$, with time-derivatives $\pdt g = b$ satisfying
\begin{equation}\label{eq:bbound}
	|b|_{g(t)}^2 + |\nabla b|^2_{g(t)} \leq P_0
\end{equation} 
for $P_0 >  0$  on $M\times[0, T]$.  We will also assume the bounds on $\Lambda$ and its derivatives 
hold uniformly on $M\times [0, T]$:
\begin{equation}\label{eq:lambdabd}
	\alpha_1 g^{ij}(x, t)\leq \Lambda^{ij}(x,t)\leq \alpha_2 g^{ij}(x, t), 
	\qquad \left|\pd{\Lambda}{t}\right|^2_{g(t)} + |\nabla \Lambda|^2_{g(t)} \leq Q_0.
\end{equation}

The Ricci curvature lower bounds \eqref{eq:rcbound} 
imply, via the Bishop-Gromov volume comparison theorem, 
that there are constants $a^{\prime}$ and $A^{\prime}$ depending
only on $n$ and $K$ such that
\[
	\vol(B_{g(t)}(x_0, R)) \leq A^{\prime}e^{a^{\prime} R}.
\]
The uniform bounds on $b(t)$ imply that the metrics $g(t)$ are uniformly equivalent on $[0, T_0]$,
in fact,
\[
	e^{-P_0(t_2- t_1)}g(x, t_1) \leq g(x, t_2) \leq e^{P_0(t_2-t_1)}g(x, t_1),
\]
and the bounds on $\nabla b(t)$ imply those on $\pdt\nabla$, in view of \eqref{eq:evgamma}.
With the uniform equivalence, we have
\[
	\frac{1}{C_6}d_{g(t)}(x, x_0) \leq r(x) \df d_{g(0)}(x, x_0) \leq C_6 d_{g(t)}(x, x_0)
\]
for $C_6 = C_6(P_0T)$, and thus that
\[
	\Omega_R \df \left\{\,r(x) < R\, \right\} \subset B_{g(t)}(x_0, C_6R).
\]
Consequently, there exists $a^{\prime\prime} = a^{\prime\prime}(K, P_0T)$ such that
\begin{equation}\label{eq:volgrwth2}
	\vol_{g(t)}(\Omega_R) \leq A^{\prime} e^{a^{\prime\prime}R}	
\end{equation}
for all $R$ and $t\in [0, T]$. 

Before we formulate our global estimate, we define the function
\[
	\varphi_a(x) \df \varphi_{a, x_0, 0}(x) \df  e^{-ar(x)}
\]
for a parameter $a\geq 0$ to be determined later. 
Off of the $g(0)$-cut locus of $x_0$, we have $|\nabla r|^2_{g(0)}\equiv 1$, and
in view of the uniform equivalence, we have $|\nabla r|_{g(t)}^2 \leq C_7$
for some $C_7 = C_7(P_0T)$.  Thus, $d\mu_{g(t)}$-a.~e.~, we have 
\[
	|\nabla\varphi_a|^2_{g(t)} \leq C_7 a^2 \varphi_a^2. 
\]

\begin{proposition}\label{prop:global}
	For any $\alpha_1$, $\alpha_2$, $a_0$, $P_0$, $Q_0 > 0$, and $K\geq 0$,
	there exist positive numbers $a_1$, $\eta_4$, $m_2$ and $T_4 \leq 1$, depending only on this data
	such that if $\tau \leq T_4$ and 
	$g(t)$ and $\Lambda(t)$ are as above, satisfying the bounds
	(\ref{eq:rcbound}) -- (\ref{eq:lambdabd}) on $M\times[0, \tau]$,
	then
\begin{enumerate}
\item If $V \in \mathcal{V}_{M}^{0, \tau}(E)$ satisfies
	$|V|_{g(t)}^2 \leq e^{a_0d_{g(t)}(x_0, x)}$, then 
\begin{equation}\label{eq:globalode}
	\liminf_{R\to\infty}\left\|\lambda^{-m}\sqrt{\varphi_{a_1}}\pd{V}{t}\right\|_{\Omega_{2R}\times [0, \tau]}^2 
		\geq m^2 \|\lambda^{-(m+1)}\sqrt{\varphi_{a_1}}V\|_{M_{0, \tau}}^2
\end{equation}	
for all $m \geq m_2$, $\eta \leq \eta_4$.
\item If $V \in \mathcal{V}_{M}^{0, \tau}(E)$ satisfies
	$|V|_{g(t)}^2 + |\nabla V|_{g(t)}^2 \leq e^{a_0d_{g(t)}(x_0, x)}$, then 
\begin{align}\label{eq:globalpde2}
\begin{split}
	&\liminf_{R\to\infty}\rho_m(\tau, \eta)\|\lambda^{-m}\sqrt{\varphi_{a_1}}LV\|_{\Omega_{2R}\times[0, \tau]}^2
\phantom{\geq \frac{1}{4}\|\lambda^{-(m+1)}\sqrt{\varphi_{a_1}}V\|_{M_{0, \tau}}^2}\\
&\qquad\qquad\geq
\frac{3}{4}\|\lambda^{-(m+1)}\sqrt{\varphi_{a_1}}V\|_{M_{0, \tau}}^2\phantom{\geq} {}+\frac{1}{4}\|\lambda^{-m}\sqrt{\varphi_{a_1}}\nabla V\|_{M_{0, \tau}}^2
\end{split}
\end{align}
for all $m \geq m_2$, $\eta \leq \eta_4$, where $\rho_m$ is as defined in Lemma \ref{lem:pde2}.
When $M$ is compact, the estimates hold with $a_1 = 0$, $\varphi_{a_1} \equiv 1$.
\end{enumerate}
\end{proposition}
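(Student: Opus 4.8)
The plan is to pass from the local Carleman estimates of Lemmas~\ref{lem:odecarleman} and~\ref{lem:pde2} to their global counterparts by choosing the cut-off functions $G = G_R$ to be built from the radial function $r(x) = d_{g(0)}(x,x_0)$, and then absorbing the resulting boundary terms using the exponential weight $\varphi_{a_1}$ together with the volume growth bound~\eqref{eq:volgrwth2}. Concretely, fix a smooth $\psi:\mathbb{R}\to[0,1]$ with $\psi\equiv 1$ on $(-\infty,1]$, $\psi\equiv 0$ on $[2,\infty)$, and $|\psi'|\leq 2$, and set $G_R(x) \df \varphi_{a_1}(x)\,\psi(r(x)/R)^2$. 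A direct computation using $|\nabla r|_{g(t)}^2\leq C_7$ and $|\nabla\varphi_{a_1}|_{g(t)}^2\leq C_7 a_1^2\varphi_{a_1}^2$ (both recorded just before the proposition) gives, $d\mu_{g(t)}$-a.e. on the support of $G_R$,
\[
	\frac{|\nabla G_R|^2}{G_R} \leq C_8\Big(a_1^2 + \frac{1}{R^2}\Big)\varphi_{a_1}\,\psi(r/R)^2 + \frac{C_8}{R^2}\varphi_{a_1}\mathbf{1}_{\{R\leq r\leq 2R\}}
\]
for a constant $C_8 = C_8(P_0T)$. The key point is that the first summand is pointwise bounded by $C_8(a_1^2 + R^{-2})\,G_R$, so the corresponding error terms in~\eqref{eq:pde2} are of the form $C_8(a_1^2 + R^{-2})\rho_m(\tau,\eta)$ times the quantities $\|\lambda^{-(m+1)}\sqrt{G_R}V\|^2$ and $\|\lambda^{-m}\sqrt{G_R}\nabla V\|^2$ that already appear on the right-hand side.

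Next I would run the absorption argument. Applying Lemma~\ref{lem:pde2} with $G = G_R$ and $V$ as in case~(2), the error terms split into the ``interior'' piece just described and the ``annular'' piece supported on $\{R\leq r\leq 2R\}$, of size at most $(C_8/R^2)\iint_{\{R\leq r\leq 2R\}\times[0,\tau]}\varphi_{a_1}(|\lambda^{-m}V|^2 + |\lambda^{-(m+1)}\nabla V|^2)\,d\mu\,dt$. First choose $m_2$ large, $T_4\leq 1$ small, and then $a_1 = a_1(\alpha_1,\alpha_2,a_0,P_0,Q_0,K)$ small enough that $C_8 a_1^2\rho_m(\tau,\eta)\leq 1/8$ for all $m\geq m_2$, $\tau\leq T_4$; since $\rho_m(\tau,\eta)\to 0$ as $m\to\infty$, $\tau+\eta\to 0$, this is possible, and it lets the interior error terms be absorbed into the left-hand side of~\eqref{eq:pde2}, yielding a clean lower bound of the shape $\tfrac34\|\lambda^{-(m+1)}\sqrt{G_R}V\|_{M_{0,\tau}}^2 + \tfrac14\|\lambda^{-m}\sqrt{G_R}\nabla V\|_{M_{0,\tau}}^2$ plus only the annular error. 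The role of $a_1$ is exactly to beat the volume growth: by~\eqref{eq:volgrwth2} and the hypothesis $|V|^2 + |\nabla V|^2\leq e^{a_0 d_{g(t)}(x_0,x)}$, the annular integral is bounded by $(C_8/R^2)(\sup_{[0,\tau]}\lambda^{-2m})\int_R^{2R}e^{-a_1 r}e^{a_0 r}A'e^{a''r}\,dr$ up to harmless constants, which, once $a_1 > a_0 + a''$, tends to $0$ as $R\to\infty$. Taking $\liminf_{R\to\infty}$ and using monotone convergence ($G_R\uparrow\varphi_{a_1}$) on the left and on the $V,\nabla V$ terms on the right then gives~\eqref{eq:globalpde2}.

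For case~(1), the argument is the same but strictly easier: Lemma~\ref{lem:odecarleman} applied to $G = G_R$ has \emph{no} error term at all on its right-hand side, so one only needs to discard the now-shrinking support issue — that is, one simply uses $\sqrt{G_R}\leq\sqrt{\varphi_{a_1}}$ on the left (so $\|\lambda^{-m}\sqrt{G_R}\partial_t V\|\leq\|\lambda^{-m}\sqrt{\varphi_{a_1}}\partial_t V\|_{\Omega_{2R}\times[0,\tau]}$) and $G_R\uparrow\varphi_{a_1}$ with monotone convergence on the right, obtaining~\eqref{eq:globalode}. Here the only constraint on $a_1$ is the volume-growth constraint $a_1 > a_0 + a''$ needed to keep the right-hand side $\|\lambda^{-(m+1)}\sqrt{\varphi_{a_1}}V\|_{M_{0,\tau}}^2$ finite, so after taking the maximum of the two thresholds we get a single $a_1$ valid for both parts. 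The compact case is immediate: take $\Omega = M$, $G\equiv 1$, $a_1 = 0$, with no cut-off and no boundary terms.

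The main obstacle is bookkeeping rather than conceptual: one must verify that the various $\varphi_{a_1}$-weighted norms appearing are finite (so that ``$\liminf$'' statements are not vacuous), that the error terms in~\eqref{eq:pde2} genuinely have the claimed $G_R$-structure up to the annular remainder — this uses crucially that $\varphi_{a_1}$ is log-Lipschitz with constant controlled by $a_1$, i.e. $|\nabla\varphi_{a_1}|^2/\varphi_{a_1}\leq C_7 a_1^2\varphi_{a_1}$ — and that the order of choosing constants ($m_2$, then $T_4$, then $a_1$ for the absorption, then possibly enlarging $a_1$ for the volume bound) is consistent and circular-free. One also needs $\tau\leq T_4\leq\min(T_1,T_3)$ and $\eta\leq\eta_4\leq\min(\eta_1,\eta_3)$ so that both local lemmas apply simultaneously; collecting these gives the asserted $a_1,\eta_4,m_2,T_4$ depending only on $\alpha_1,\alpha_2,a_0,P_0,Q_0,K$.
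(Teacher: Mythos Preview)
Your overall strategy---build $G_R$ from a radial cut-off times the exponential weight $\varphi_{a_1}$, apply the local Lemmas~\ref{lem:odecarleman} and~\ref{lem:pde2}, split the error term in \eqref{eq:pde2} into an interior piece controlled by $C_8 a_1^2 G_R$ and an annular piece that vanishes as $R\to\infty$, then absorb and pass to the limit---is exactly the paper's approach, and your gradient computation for $|\nabla G_R|^2/G_R$ is correct.

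There is, however, a genuine gap in the order in which you choose constants, precisely at the point you flag as needing to be ``consistent and circular-free.'' You propose to choose $m_2$ and $T_4$ first, then take $a_1$ \emph{small} so that $C_8 a_1^2\rho_m(\tau,\eta)\leq 1/8$, and only afterwards \emph{enlarge} $a_1$ to exceed the volume/growth threshold $a_0 + a''$ (more precisely, $C_6 a_0 + a''$ after converting $d_{g(t)}$ to $r$). But enlarging $a_1$ at the last step can destroy the absorption inequality you just arranged, since $m_2$ and $T_4$ are by then fixed and $\rho_m$ is no longer adjustable. The two constraints on $a_1$ pull in opposite directions: the annular decay and the finiteness of the global norms force $a_1$ to be \emph{large}, while your absorption step, as written, wants $a_1$ \emph{small}. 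Taking ``the maximum of the two thresholds'' does not resolve this.

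The fix is simply to reverse the order: first fix $a_1$ large, determined solely by $a_0$, $K$, $P_0$ (via $a''$ and the metric-equivalence constant $C_6$), so that the annular error tends to zero and the $\varphi_{a_1}$-weighted norms are finite; \emph{then} choose $m_2$, $T_4$, $\eta_4$ so that $\rho_{m}(\tau,\eta)\,C_8 a_1^2 < 1/4$ for all $m\geq m_2$, $\tau\leq T_4$, $\eta\leq\eta_4$. Since $\rho_m(\tau,\eta)\to 0$ as $m\to\infty$ and $\tau+\eta\to 0$, this is always possible once $a_1$ is fixed, and it is exactly what the paper does. With this reordering your argument goes through.
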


\begin{proof}
Let $0 < \tau \leq 1$.
First, we construct the cut-off function. We choose  a monotone-decreasing
$\eta\in C^{\infty}(\mathbb{R}, [0, 1])$ with 
\[
	\eta(s)  = \left\{
		    \begin{array}{cr} 0 & \mbox{if}\quad s \geq 2\\
				      1 & \mbox{if}\quad s \leq 1,
		   \end{array}
		   \right.
\]
and $(\eta^{\prime})^2 \leq \beta \eta$.  Then, with $r(x) = d_{g(0)}(x, x_0)$ as before, we define the function
\[
	G_R(x)\df G_{x_0, R, a}(x) \df \eta\left(\frac{r(x)}{R}\right)\varphi_{a}(x)
\]
which is supported in $\Omega_{2R}$ and is smooth off of $\operatorname{cut}_{g(0)}(x_0)\cap \Omega_{2R}$,
where
\[
	\nabla G_R = \left(\frac{\eta^{\prime}}{R} - a\eta\right)\varphi_a\nabla r. 
\]
As we have observed above, there is a constant $C_7 = C_7(P_0)$ (note $\tau \leq 1$)
such that $|\nabla r|^2_{g(\tau)} \leq C_7$, and it follows that $d\mu_{g(t)}$-a.~e.~ on $\Omega_{2R}$,
we have
\begin{equation}\label{eq:ggrad}
	\frac{|\nabla G_R|^2}{G_R}(x) \leq C_8\left\{\begin{array}{lc}
						 \left(\frac{\beta}{R^2} +  a^2\right)\varphi_a
						 &\mbox{on}\quad A(R, 2R),\\
						 a^2\varphi_a &\mbox{on}\quad\Omega_R
						\end{array}\right. 	
\end{equation}
for some $C_8 = C(P_0)$, where $A(R, 2R) \df \Omega_{2R}\setminus\Omega_R$.
Also, as above, we know there exist positive constants $A_2$, $a_2=a_2(K, P_0)$ such that
\begin{equation}\label{eq:volgrwth}
	\vol_{g(t)}(\Omega_{\rho}) \leq A_2 e^{a_2 \rho}
\end{equation}
for all $\rho > 0$ and $t\in [0, \tau]$, and the uniform equivalence implies that there exist constants $A_3$, $a_3 = a_3(a_0, P_0)$,
such that, under either the growth assumptions of (1) or (2), we have
\begin{equation}\label{eq:secgrwth}
	|V|_{g(t)}^2(x, t) \leq A_3 e^{a_3 r(x)}, \quad\mbox{or} \quad 
	|V|_{g(t)}^2 + |\nabla V|_{g(t)}^2 \leq A_3e^{a_3 r(x)}. 
\end{equation}
Thus, in the definition of $G_R$, we will take
$a = a_1 \df 4 \times \max\{a_2, a_3\}$.

To prove \eqref{eq:globalode}, we apply \eqref{eq:volgrwth} and \eqref{eq:secgrwth} and 
observe that, for all $R \geq 1$, we have, for our choice of $a_1$,
\begin{align*}
	\|\lambda^{-(m+1)}\sqrt{\varphi_{a_1}} V\|_{\Omega_R\times[0, T]}^2 
	&\leq \|\lambda^{-(m+1)}\sqrt{G_R} V\|_{\Omega_{2R}\times[0, \tau]}\\
	 &\leq\frac{A_3}{\eta^{2m+2}}\!\!\!\dint{\Omega_{2R}\times[0, \tau]}\!\!\!e^{-(a_1 - a_3)r}\,d\mu_{g(t)}\\
	 &\leq\frac{A_3}{\eta^{2m+2}}\sum_{l=0}^{\lceil 2R \rceil - 1}\dint{A(l, l + 1)\times [0, \tau]}\!\!\!
		e^{-(a_1 - a_3)r}\,d\mu_{g(t)}\\
	 &\leq A_2 A_3 \frac{\tau}{\eta^{2m+2}}\left(\frac{e^{a_2}}{1 - e^{-(a_1 - a_2 - a_3)}}\right)  
\end{align*}
The sets $\Omega_R$ exhaust $M$, and 
thus, taking $T_1 = T_1(P_1)$ and $\eta_1 = \eta_1(P_1)$ as in Lemma \ref{lem:odecarleman},
we have \eqref{eq:globalode} for all $\tau \leq T^{\prime} \df \min\{T_1, 1\}$ and 
$\eta \leq  \eta_1$ upon sending $R\to\infty$ along any subsequence.  

To prove \eqref{eq:globalpde2}, we apply Lemma \ref{lem:pde2} to $V$ on 
$\Omega_{2R}\times [0, \tau]$  
with data $\alpha_1$, $\alpha_2$, $P_0$, $Q_0$, and $G = G_R$.
Up to a multiplicative factor, the integral in the final term of equation \eqref{eq:pde2} has the form
$I(R) = I_1(R) + I_2(R)$, where
\[
	I_2(R) \df \dint{A(R, 2R)\times[0, \tau]}\!\!\!\frac{|\nabla G_R|^2}{G_R}
			(|\lambda^{-(m+1)}V|^2 + |\lambda^{-m}\nabla V|^2)\,d\mu\,dt.
\]
By \eqref{eq:ggrad} -- \eqref{eq:secgrwth},
$I_2$ satisfies
\begin{align*}
	I_2(R) &\leq \tau A_2A_3C_8\left(\frac{\eta^2 + 1}{\eta^{2m+2}}\right)
	\left(\frac{\beta}{R^2} +  a_1^2\right)e^{-(a_1 - 2a_2 - a_3)R}
\end{align*}
for all $R$ and $\eta > 0$, and so we have $\lim_{R\to\infty}I_2(R) = 0$.
Thus, again using \eqref{eq:ggrad}, we have
\begin{align*}
	\lim_{R\to\infty} I(R) &= \lim_{R\to\infty} \dint{\Omega_{R}\times[0, \tau]}\!\!\!\frac{|\nabla G_R|^2}{G_R}
			(|\lambda^{-(m+1)}V|^2 + |\lambda^{-m}\nabla V|^2)\,d\mu\,dt\\
			       &\leq C_8 a_1^2 \left(\|\lambda^{-(m+1)}\sqrt{\varphi_{a_1}} V\|^2_{M_{0, \tau}}
					+ \|\lambda^{-m}\sqrt{\varphi_{a_1}}\nabla V\|^2_{M_{0, \tau}}\right),	    	 
\end{align*}
where the finiteness of the terms on the right-hand are easily established using
\eqref{eq:volgrwth} and \eqref{eq:secgrwth} and the argument for \eqref{eq:globalode}.
Now let $T_3$, $\eta_3$, and $m_1$ be as in Lemma \ref{lem:pde2}, and choose
$T^{\prime\prime} \leq \min\{T_3, 1\}$, $\eta^{\prime}\leq \eta_3 $, and $m^{\prime}\geq m_1$ such that 
\[
	\rho_{m^{\prime}}(T^{\prime\prime}, \eta^{\prime}) C_8 a_1^2 < \frac{1}{4}.
\]
Then, we may estimate the limit of the right-hand side of \eqref{eq:pde2} along any subsequence $R_j\to \infty$
from below as
\begin{align*}
&\lim_{j\to\infty} \bigg\{\frac{1}{2}\|\lambda^{-m}\sqrt{G_{R_j}}\nabla V\|_{\Omega_{2R_j}\times[0, \tau]}^2
		+ \|\lambda^{-(m+1)}\sqrt{G_{R_j}}V\|^2_{\Omega_{2R_j}\times[0, \tau]}\\
&\qquad
		{}- \rho_m(\tau, \eta)\!\!\!\dint{\Omega_{2R_j}\times[0, \tau]}\!\!\!
		\frac{|\nabla G_{R_j}|^2}{G_{R_j}}\left(|\lambda^{-m}\nabla V|^2 +
				|\lambda^{-(m+1)} V|^2\right)\,d\mu\,dt 
\bigg\}\\
&\qquad\qquad\geq \frac{3}{4}\|\lambda^{-(m+1)}\sqrt{\varphi_{a_1}}V\|^2_{M_{0, \tau}} 
		+ \frac{1}{4}\|\lambda^{-m}\sqrt{\varphi_{a_1}}\nabla V\|^2_{M_{0, \tau}}, 
\end{align*}
for all $0 < \tau \leq T^{\prime\prime}$, $0 < \eta \leq \eta^{\prime}$, and $m \geq m^{\prime}$,
which implies \eqref{eq:globalpde2}.   This completes the proof, taking $T_4 \df \min\{T^{\prime}, T^{\prime\prime}\}$,
$\eta_4 \df \min\{\eta^{\prime}, \eta^{\prime}\}$, and $m_2 \df m^{\prime}$.
\end{proof}

${}$

\subsection{A backwards-uniqueness theorem for a coupled system of differential inequalities.}

With Proposition \ref{prop:global} in hand, we turn to the proof of the main result of the section,
applying the lower estimates \eqref{eq:globalode}, \eqref{eq:globalpde2} in tandem with the matching
upper inequalities \eqref{eq:odeineq} and \eqref{eq:pdeineq} of our PDE-ODE system.  The mechanism 
in the proof is based on that in \cite{LP}.

\begin{proof}[Proof of Theorem \ref{thm:bu}]
The result will be a consequence of the following claim: 

\emph{For any (positive) choice of the data 
$\alpha_1$, $\alpha_2$, $a$, $A$, $C$, $K$ $P$, $Q$, there exists $T_5 = T_5(\alpha_1, \alpha_2, a, A, C, K,  P, Q) > 0$
such that if $0 < \tau \leq T_5$ and $g(t)$ is a complete family of metrics, satisfying, along with 
$\Lambda(t)$, $X(t)$, and $Y(t)$, the assumptions
(\ref{eq:pq}) -- (\ref{eq:odeineq}) on $M\times [0, \tau]$, then $X(\tau) \equiv 0$, $Y(\tau)\equiv 0$
implies $X(t) \equiv 0$, $Y(t)\equiv 0$ on $[0, \tau]$.}

Indeed, if assumptions (\ref{eq:pq}) - (\ref{eq:odeineq}) are met on $M\times[0, T]$ with $T > T_5$, and $X(T) \equiv 0$, 
$Y(T)\equiv 0$, then one may apply the claim successively to $X_i(t)\df X(t + (T- iT_5)_+)$, and $Y_i(t) \df Y(t + (T-iT_5)_+)$ on 
$[0, \min\{T - T_5, T- (i-1)T_5\}]$, for $i = 1, 2, \ldots, \lceil T/T_5\rceil - 1$.

It remains to prove the claim. We let $a_1$, $m_2$, $T_4$, and $\eta_4$ be the constants supplied by 
Proposition \ref{prop:global} and choose $T_5 \leq T_4$, $\eta_5 \leq \eta_4$, and $m_3 \leq m_2$ 
further depending on $C$ so that
\begin{equation}\label{eq:rhoc}
	C\max\left\{\rho_{m_3}(T_5, \eta_5), \frac{1}{m_3^2}\right\} \leq \frac{1}{8},	
\end{equation}
and
\begin{equation}\label{eq:lastconst}
	T_5 + \eta_5 \leq 1.
\end{equation}
Then we fix $0 < \tau \leq T_5$, $0 < \eta \leq \eta_5$, $m \geq m_3$.

Selecting arbitrary  $0 < t_1 < t_2 < \tau$, we choose a non-decreasing 
$\xi\in C^{\infty}(\mathbb{R}, [0, 1])$ such that
\[
\left\{\begin{array}{lc}
	\xi(t) &= 1\quad\mbox{for}\quad t > t_2\phantom{.}\\
	\xi(t) &= 0\quad\mbox{for}\quad t < t_1.
\end{array}\right.
\]
Then $\tilde{X}(x,t) \df \xi(t)X(x, t)$ and $\tilde{Y}(x, t)\df \xi(t)Y(x, t)$ belong to the classes 
$\mathcal{V}_1\df \mathcal{V}_M^{0, \tau}(\mathcal{X})$, and
$\mathcal{V}_2\df\mathcal{V}_M^{0, \tau}(\mathcal{Y})$,
respectively. 

Now,
\[
    |L\tilde{X}|^2 \leq 2\xi^2|LX|^2 + 2(\xi^{\prime})^2|X|^2, \quad\mbox{and}
	\quad \left|\pd{\tilde{Y}}{t}\right|^2 \leq 2 \xi^2\left|\pd{Y}{t}\right|^2 + 2(\xi^{\prime})^2|Y|^2.
\]
Thus, with the growth assumptions \eqref{eq:burcgrwth}, the 
Ricci curvature lower bound \eqref{eq:bugrwth}, 
and inequalities \eqref{eq:pdeineq} and \eqref{eq:odeineq},
we have, as in the proof of Proposition \ref{prop:global}, that
\[
\lim_{R\to\infty}\|\lambda^{-m}\sqrt{\varphi_{a_1}}LX\|_{\Omega_{2R}\times[0, \tau]}^2 
= \|\lambda^{-m}\sqrt{\varphi_{a_1}}LX\|_{M_{0, \tau}}^2 < \infty,	
\]
and
\[
\lim_{R\to\infty}\left\|\lambda^{-m}\sqrt{\varphi_{a_1}}\pd{Y}{t}\right\|_{\Omega_{2R}\times[0, \tau]}^2 
= \left\|\lambda^{-m}\sqrt{\varphi_{a_1}}\pd{Y}{t}\right\|_{M_{0, \tau}}^2 < \infty.
\]
Then,
\begin{align*}
	&\frac{1}{m^2}\left\|\lambda^{-m}\sqrt{\varphi_{a_1}}\pd{\tilde{Y}}{t}
	\right\|^2_{M_{0, \tau}} 
	=\frac{1}{m^2}\left(\left\|\lambda^{-m}\sqrt{\varphi_{a_1}}\pd{\tilde{Y}}{t}\right\|^2_{M_{t_1, t_2}} 
	+ \left\|\lambda^{-m}\sqrt{\varphi_{a_1}}\pd{Y}{t}\right\|^2_{M_{t_2, \tau}}\right)\\
	&\qquad\leq\frac{1}{m^2} \left\|\lambda^{-m}\sqrt{\varphi_{a_1}}\pd{\tilde{Y}}{t}
	\right\|^2_{M_{t_1, t_2}}\\	
	&\qquad\phantom{\leq}{}+ \frac{1}{8}\left(\|\lambda^{-m}\sqrt{\varphi_{a_1}}X\|_{M_{t_2, \tau}}^2
	+ \|\lambda^{-m}\sqrt{\varphi_{a_1}}\nabla X\|_{M_{t_2, \tau}}^2
	+ \|\lambda^{-m}\sqrt{\varphi_{a_1}} Y\|_{M_{t_2, \tau}}^2\right)
\end{align*}
and 
\begin{align*}
	&\rho_m(\tau, \eta)\|\lambda^{-m}\sqrt{\varphi_{a_1}}L \tilde{X}\|^2_{M_{0, \tau}} \\
&\qquad= \rho_m(\tau, \eta)\left(\|\lambda^{-m}\sqrt{\varphi_{a_1}}L \tilde{X}\|^2_{M_{t_1,t_2 }}
		+ \|\lambda^{-m}\sqrt{\varphi_{a_1}}L X\|^2_{M_{t_2, \tau}}\right)\\
&\qquad\leq\rho_m(\tau, \eta)\|\lambda^{-m}\sqrt{\varphi_{a_1}}L \tilde{X}\|^2_{M_{t_1,t_2 }}\\
&\qquad\qquad {}+ \frac{1}{8}\left(\|\lambda^{-m}\sqrt{\varphi_{a_1}}X\|_{M_{t_2, \tau}}^2
	+ \|\lambda^{-m}\sqrt{\varphi_{a_1}}\nabla X\|_{M_{t_2, \tau}}^2
	+ \|\lambda^{-m}\sqrt{\varphi_{a_1}} Y\|_{M_{t_2, \tau}}^2\right),
\end{align*}
using \eqref{eq:pdeineq} and \eqref{eq:odeineq} together with \eqref{eq:rhoc}.
On the other hand, by \eqref{eq:globalode} and \eqref{eq:globalpde2}, we have
\begin{align*}
	\|\lambda^{-(m+1)}\sqrt{\varphi_{a_1}} Y\|^2_{M_{t_2, \tau}}
	&\leq\|\lambda^{-(m+1)}\sqrt{\varphi_{a_1}}\tilde{Y}\|^2_{M_{0, \tau}}\\
	&\leq\frac{1}{m^2}\left\|\lambda^{-m}\sqrt{\varphi_{a_1}}\pd{\tilde{Y}}{t}\right\|^2_{M_{0, \tau}}
\end{align*}
and 
\begin{align*}
	&\frac{1}{4}\|\lambda^{-m}\sqrt{\varphi_{a_1}}\nabla X\|_{M_{t_2, \tau}}^2
	+ \frac{3}{4}\|\lambda^{-(m+1)}\sqrt{\varphi_{a_1}} X\|_{M_{t_2, \tau}}^2\\
	&\qquad\qquad\qquad\leq \frac{1}{4}\|\lambda^{-m}\sqrt{\varphi_{a_1}}\nabla \tilde{X}\|_{M_{0, \tau}}^2
	+ \frac{3}{4}\|\lambda^{-(m+1)}\sqrt{\varphi_{a_1}}\tilde{X}\|_{M_{0, \tau}}^2\\
	&\qquad\qquad\qquad \leq \rho_m(\tau, \eta)\|\lambda^{-m}\sqrt{\varphi_{a_1}}L\tilde{X}\|_{M_{0, \tau}}^2.
\end{align*}
Since, by \eqref{eq:lastconst},
\begin{align*}
	|\lambda^{-m} X|^2 + |\lambda^{-m}Y|^2 &\leq (\tau + \eta)^2 (|\lambda^{-(m+1)} X|^2 + |\lambda^{-(m+1)}Y|^2)\\
			&\leq|\lambda^{-(m+1)} X|^2 + |\lambda^{-(m+1)}Y|^2,
\end{align*}
we can combine the corresponding parts of the previous four sets of inequalities to obtain
\begin{align*}
\frac{7}{8}\|\lambda^{-(m+1)}\sqrt{\varphi_{a_1}}Y\|_{M_{t_2, \tau}}^2 
	&\leq\frac{1}{m^2} \left\|\lambda^{-m}\sqrt{\varphi_{a_1}}\pd{\tilde{Y}}{t}\right\|^2_{M_{t_1, t_2}}\\
&\phantom{\leq}\qquad{}+ \frac{1}{8}\left(\|\lambda^{-(m+1)}\sqrt{\varphi_{a_1}}X\|_{M_{t_2, \tau}}^2
			+ \|\lambda^{-m}\sqrt{\varphi_{a_1}}\nabla X\|_{M_{t_2, \tau}}^2\right) 
\end{align*}
and
\begin{align*}
\frac{5}{8}\|\lambda^{-(m+1)}\sqrt{\varphi_{a_1}} X\|_{M_{t_2, \tau}}^2 
	&+\frac{1}{8}\|\lambda^{-m}\sqrt{\varphi_{a_1}}\nabla X\|_{M_{t_2, \tau}}^2\\
&\leq\rho_m(\tau, \eta)\|\lambda^{-m}\sqrt{\varphi_{a_1}}L \tilde{X}\|^2_{M_{t_1,t_2 }}
+ \frac{1}{8}\|\lambda^{-(m+1)}\sqrt{\varphi_{a_1}} Y\|_{M_{t_2, \tau}}^2.\qquad\qquad\qquad\qquad\qquad
\end{align*}
Adding these two inequalities implies, then, that
\begin{align*}
&\frac{1}{2}\left(\|\lambda^{-(m+1)}\sqrt{\varphi_{a_1}}X\|^2_{M_{t_2, \tau}}
		+ \|\lambda^{-(m+1)}\sqrt{\varphi_{a_1}}Y\|^2_{M_{t_2, \tau}}\right)\\
	&\qquad\qquad\leq \rho_m(\tau, \eta)\|\lambda^{-m}\sqrt{\varphi_{a_1}}L\tilde{X}\|_{M_{t_1, t_2}}^2
	+ \frac{1}{m^2}\left\|\lambda^{-m}\sqrt{\varphi_{a_1}}\pd{\tilde Y}{t}\right\|_{M_{t_1, t_2}}^2.
\end{align*}	
Using the notation $W \df X\oplus Y$, $\tilde{W} \df \xi W$, $\mathcal{L}(W) \df LX \oplus \pdt Y$, and 
\[
	\sigma_m(\tau, \eta) \df 2\max\{\rho_m(\tau, \eta), 1/m^2 \},
\] 
we can write the above more economically as
\begin{equation}\label{eq:pen}
	\|\lambda^{-(m+1)}\sqrt{\varphi_{a_1}}W\|^2_{M_{t_2, \tau}} 
		\leq \sigma_m(\tau, \eta)\|\lambda^{-m}\sqrt{\varphi_{a_1}}\mathcal{L}\tilde{W}\|^2_{M_{t_1, t_2}}. 	
\end{equation}

Choosing now $t_2 < t_3 < \tau$, it follows from \eqref{eq:pen} that
\begin{equation}\label{eq:last}
	\|\lambda^{-m}\sqrt{\varphi_{a_1}}W\|^2_{M_{t_3, \tau}} 
		\leq \sigma_m(\tau, \eta)\|\lambda^{-m}\sqrt{\varphi_{a_1}}\mathcal{L}\tilde{W}\|^2_{M_{t_1, t_2}}. 	
\end{equation}
Since $\lambda^{-m}(t) \leq (\tau- t_2 + \eta)^{-m}$ on $[t_1, t_2]$, and
	  $\lambda^{-(m+1)}(t) \geq (\tau - t_3 + \eta)^{-(m+1)}$ on $[t_3, \tau]$, 
	\eqref{eq:last} implies
\begin{align*}
	\|\sqrt{\varphi_{a_1}}W\|^2_{M_{t_3, \tau}}&\leq \sigma_m(\tau, \eta)
			\frac{(\tau - t_3 + \eta)^{2(m+1)}}{(\tau- t_2 + \eta)^{2m}}
			\|\sqrt{\varphi_{a_1}}\mathcal{L}\tilde{W}\|_{M_{t_1, t_2}}^2\\
			&\leq \sigma_m(\tau, \eta)
			\left(\frac{\tau - t_3 + \eta}{\tau- t_2 + \eta}\right)^{2m}
			\|\sqrt{\varphi_{a_1}}\mathcal{L}\tilde{W}\|_{M_{t_1, t_2}}^2,
\end{align*}
for all $m \geq m_0$ (to obtain the second inequality, we have again used that
$\tau- t_3 + \eta \leq 1$ in view of \eqref{eq:lastconst}).
But $\sigma_m(\tau, \eta)$ remains bounded as $m$ increases, and upon sending $m\to \infty$,
we conclude
\[
	\|\sqrt{\varphi_a}W\|^2_{M_{t_3, \tau}}  = 0,
\]
and hence that $X\equiv 0$ and $Y\equiv 0$ on $[t_3, \tau]$.  Since $t_3$ can be chosen arbitrarily close
to $t_2$, and the pair $t_1$, $t_2$ arbitrarily close to $0$,  we arrive at the same conclusion on $[0, \tau]$.
This completes the proof.
\end{proof}

\section{Backwards uniqueness for the Ricci flow}

Theorem \ref{thm:rfbu} now follows easily from Proposition \ref{prop:diffineq} and Theorem \ref{thm:bu}.

\begin{proof}[Proof of Theorem \ref{thm:rfbu}]
It suffices to show that $g(t) \equiv \gt(t)$ on $[\delta, T]$
for any $0 < \delta < T$.  In view of Proposition \ref{prop:diffineq}, we need only to check
that $\ve{X}$, $\nabla \ve{X}$, and $\ve{Y}$ satisfy the required growth condition \ref{eq:bugrwth},
and it was shown in the proof of that proposition that the components of $\ve{X}$ and $\ve{Y}$,
and the quantities $|\nabla^{(m)} \Rmt_{g(t)}|$, $|\delt^{(m)}\Rmt|_{g(t)}$ and $|\nabla^{(m)}A|_{g(t)}$
are in fact bounded on $M\times [\delta, T]$.  Thus it remains only to show that $|\nabla T|_{g(t)}$ and $|\nabla U|_{g(t)}$
(and hence $|\nabla\ve{X}|_{g(t)}$) are bounded.
But from an inductive application of the identity
\[
	\nabla^{(m)}\delt^{(l)}\Rmt = \nabla^{(m-1)}\delt^{(l+1)}\Rmt 
	+ \sum_{i = 0}^{m-1}\nabla^{(i)}A\ast\nabla^{(m - 1 - i)}\delt^{(l)}\Rmt, 
\]
we obtain bounds on ``mixed'' derivatives of the form $\nabla^{(m)}\delt^{(l)}\Rmt$ for all $m$ and $l\geq 0$,
from which it follows, in particular, that $\nabla T = \nabla \Rm - \nabla \Rmt$ and 
$\nabla U = \nabla\nabla \Rm - \nabla\delt\Rmt$ are bounded.

The above remarks yield a constant
$C^{\prime} = C^{\prime}(\delta, K, \tilde{K}, T)$ such that $\ve{X} = T\oplus U$
and $\ve{Y} = h \oplus A \oplus B$ satisfy
\[
	|\ve{X}|_{g(t)}^2 + |\nabla\ve{X}|_{g(t)}^2 + |\ve{Y}|_{g(t)}^2 \leq C^{\prime}
\]	  
on $M\times[0, T]$.  Applying then Theorem \ref{thm:bu} to $\ve{X}$ and $\ve{Y}$ via Proposition \ref{prop:diffineq}, 
with metrics $g(t)$ and $\Lambda^{ij}(t) = g^{ij}(t)$
on $M\times [\delta, T]$, we conclude that $\ve{X} \equiv \ve{0}$, $\ve{Y}\equiv \ve{0}$,
and hence that $h(t) = g(t) -\gt(t) \equiv 0$, for $t\in [\delta, T]$.
\end{proof}

Theorem \ref{thm:isom} is now simply a consequence of the diffeomorphism invariance
of the Ricci tensor.

\begin{proof}[Proof of Theorem \ref{thm:isom}]
Suppose that $g(t)$ is a solution to \eqref{eq:rf} on $[0, T]$ with uniformly bounded curvature,
and $\phi\in \operatorname{Isom}(g(T))$.
Then $\gt(t) \df \phi^*(g(t))$ is a solution to \eqref{eq:rf} on $[0, T]$, since
\[
	\pdt \gt(t)  = \phi^*\left(-2\Rc(g(t))\right) = -2\Rc(\phi^*(g(t))) = -2\Rc(\gt(t)),
\]
and has uniformly bounded curvature. Since $\gt(T) = \phi^*(g(T)) = g(T)$, it follows from Theorem \ref{thm:rfbu}
that $g(t) = \gt(t) = \phi^*(g(t))$ for all $t \in [0, T]$, that is, $\phi \in \operatorname{Isom}(g(t))$
for all $t\in [0, T]$. 
\end{proof}

\begin{remark}
	It is interesting to note that the property of backwards-uniqueness is actually equivalent
	to the non-expansion of the isometry group.  Indeed, given two solutions $g(t)$, $\gt(t)$
	to \eqref{eq:rf} on $M\times [0, T]$, one may form the solution $h(t) = g(t)\oplus \gt(t)$
	on $M\times M\times [0, T]$.  Then, if $g(T) = \gt(T)$, 
	the map $\Phi: M\times M \to M\times M$ defined by $\Phi(x, y) = (y, x)$ is an isometry
	at $t= T$: 
\[
	\Phi^*(h(T)) = \gt(T)\oplus g(T) = g(T) \oplus \gt(T) = h(T).
\]
If $\operatorname{Isom}(h(T)) \subset \operatorname{Isom}(h(t))$ for all $t$, then we have
$\Phi^*(h(t)) = h(t)$, i.e., $g(t) = \gb(t)$ for all $t$.  In fact, this shows that the question of backward-uniqueness is equivalent
to the impossibility of a solution acquiring an isometry of order two 
(or, by a similar construction, of any finite order) within the lifetime of the solution.
\end{remark}

For the proof of Theorem \ref{thm:soliton}, we will need the following auxiliary lemma.

\begin{lemma}\label{lem:complete}
Suppose $g$ is a complete metric on $M$ with $|\Rc(g)|\leq K$ satisfying
\[
	\Rc(g) + \mathcal{L}_X g + \frac{\lambda}{2} g = 0
\]
for $X\in C^{\infty}(TM)$ and $\lambda\in \mathbb{R}$.  Then the vector field $X$ is complete.
\end{lemma}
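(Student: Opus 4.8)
The plan is to control $|X|_g$ along the integral curves of $X$ and then conclude using the completeness of $g$. The starting point is the identity, valid along any integral curve $\sigma\colon(a,b)\to M$ of $X$,
\[
	\frac{d}{dt}\bigl(|X|_g^2\circ\sigma\bigr) = \bigl(X(g(X,X))\bigr)\circ\sigma = \bigl((\mathcal{L}_Xg)(X,X)\bigr)\circ\sigma,
\]
which follows from $\del g = 0$ and $[X,X]=0$. Feeding in the soliton equation gives $\frac{d}{dt}\bigl(|X|_g^2\circ\sigma\bigr) = -\bigl(\Rc(X,X)+\frac{\lambda}{2}|X|_g^2\bigr)\circ\sigma$, so, using $|\Rc(g)|\le K$, the function $\phi(t)\df |X|_g^2(\sigma(t))$ obeys a differential inequality $|\phi'(t)|\le C_0\,\phi(t)$ with $C_0$ depending only on $K$ and $\lambda$. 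By Gronwall's inequality, $\phi(t)\le \phi(t_0)\exp\!\bigl(C_0|t-t_0|\bigr)$ for all $t,t_0$ in the interval of definition; in particular $|X|_g$ stays bounded on any finite subinterval of $(a,b)$.

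With this estimate I would argue by contradiction. Suppose some maximal integral curve $\sigma$ is defined only on $(a,b)$ with $b<\infty$ (the case $a>-\infty$ being identical). Fix $t_0\in(a,b)$; by the bound above there is $M_0$ with $|\dot\sigma(t)|_g=|X(\sigma(t))|_g\le M_0$ on $[t_0,b)$, whence $d_g(\sigma(t_0),\sigma(t))\le M_0(b-t_0)$ there. Thus $\sigma([t_0,b))$ has bounded diameter, so by completeness of $g$ (Hopf--Rinow) it lies in a compact set; being uniformly Lipschitz, $\sigma(t)$ then converges to some $q\in M$ as $t\to b^-$. Applying the local existence theorem for the ODE $\dot\sigma=X\circ\sigma$ with initial point $q$ extends $\sigma$ beyond $b$, contradicting maximality. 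Hence every integral curve of $X$ is defined on all of $\mathbb{R}$, i.e.\ $X$ is complete.

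I do not expect a genuine obstacle here. The only steps needing a little care are the Lie-derivative identity above — the one place where the soliton structure enters — and the passage from ``bounded-diameter image of a Lipschitz curve in a complete manifold'' to ``existence of a limit point'', which is what makes the continuation of the ODE possible. Note that the two-sided bound $|\Rc(g)|\le K$ (as opposed to merely a lower Ricci bound) is used precisely to estimate $|\Rc(X,X)|$ in both time directions, and hence to obtain the symmetric Gronwall inequality for $\phi$.
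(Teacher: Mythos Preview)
Your proof is correct and follows essentially the same approach as the paper: both differentiate $|X|_g^2$ along an integral curve, use the soliton equation together with the Ricci bound to obtain a linear differential inequality $|\phi'|\le C\phi$, and then conclude via a length/distance bound and completeness of $g$ that a maximal integral curve cannot terminate in finite time. The only cosmetic difference is that the paper writes the key identity as $2\langle\nabla_X X,X\rangle = S(X,X)$ with $S_{ij}=\nabla_i X_j+\nabla_j X_i$ bounded, whereas you phrase it equivalently via $(\mathcal{L}_X g)(X,X)$.
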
 
\begin{proof}
We have
\[
	\nabla_i X_j + \nabla_j X_i = S_{ij}
\]
for a bounded tensor $S$.  Fix $p_0\in M$, and let $\sigma: (a, b) \to M$ be the maximal solution
to the initial value problem
\[
	\left\{\begin{array}{lcl}
		\dot{\sigma}(\tau) &=& X(\sigma(\tau))\\
		\sigma(0) &=& p_0
	\end{array}\right.
\]
We claim $b = \infty$. 

Along $\sigma$, one has
\begin{align*}
	\frac{d}{d\tau}|X|^2(\sigma(\tau)) &= 2\langle \nabla_X X, X\rangle(\sigma(\tau)) 
		=  S(\sigma(\tau))(X, X)\leq C|X|^2(\sigma(\tau))
\end{align*}
for $\tau \in (a, b)$.
Thus $|X|^2(\sigma(\tau)) \leq e^{C\tau}|X|^2(p_0)$
for $0 \leq \tau < b$. But then
\begin{align*}
	d(p_0, \sigma(\tau)) &\leq \operatorname{length}(\sigma([0, \tau]))= \int_0^{\tau}|X|(\sigma(t))\,dt\\
	&\leq \frac{2}{C}|X(p_0)|(e^{C\tau/2} - 1)
\end{align*}
for $\tau$ in this range, and so, if $b < \infty$, we have 
$\limsup_{\tau\to b}d(p_0, \sigma(\tau)) < \infty$, 
contradicting the maximality of the interval $(a, b)$.
Similarly, one concludes $a = -\infty$.
\end{proof}

\begin{remark}
	If $X = \nabla f$, i.e., $g$ is a \emph{gradient} Ricci soliton, then Zhang \cite{Z} has shown
	that the completeness of the vector field $\nabla f$ follows from the completeness
	of the metric $g$ without the assumption of a curvature bound.
\end{remark}

With Lemma \ref{lem:complete}, from any $\bar{g}$, $X$ and $\lambda$ satisfying \eqref{eq:soliton} we may
construct in canonical fashion a complete self-similar solution $\gt(t)$ to the Ricci flow
with $\gt(T) = \bar{g}$.  By Theorem \ref{thm:rfbu}, this must agree identically with the solution $g(t)$
of Theorem \ref{thm:soliton}.
  
\begin{proof}[Proof of Theorem \ref{thm:soliton}]
If $\bar{g}$ is flat, then $\gt(t)\df \bar{g}$ is evidently a solution to \eqref{eq:rf} for all $t$, and 
$g(t) \equiv \gt(t)$ by Theorem \ref{thm:rfbu}. Thus we can take $\phi_t \equiv Id$ and $c \equiv 1$.

Suppose then that $\bar{g}$ is not flat. As in \cite{CK},
we define $c(t) \df \lambda(t- T) + 1$, and
take $\varphi_t:M\to M$ to be the family of diffeomorphisms defined by
\[
	\pdt\varphi_{t}(x) = 2\tilde{X}(x, t) \df \frac{2}{c(t)}X(\varphi_t(x)), \quad \varphi_T = Id.
\]
Since $g(T)$ has bounded Ricci curvature, in view of Lemma \ref{lem:complete}, $\varphi_t$ is defined
for $t\in \mathcal{I}\df \{\,c(t) > 0\,\}$.
The metrics $\gt(t)\df c(t)\varphi_{t}^*\bar{g}$, then, are complete and satisfy
\[
	\pdt \gt = -2\Rc(\gt), \quad\mbox{ and }\quad \Rc(\gt) +\mathcal{L}_{\varphi_{t}^*(\tilde{X})}\gt + \frac{1}{2c}\gt = 0 
\]
on $M\times \mathcal{I}$.
Moreover, the curvature of $\gt$ is uniformly bounded on any proper closed subinterval of $\mathcal{I}$.

Since $\gt(T) = \bar{g} = g(T)$, we apply Theorem \ref{thm:rfbu} to any subinterval $[t_1, T]$
of $\mathcal{I}\cap [0, T]$ to conclude $g(t) \equiv \gt(t)$ for all 
$t\in \mathcal{I}\cap [0, T]$. We claim that we must have $[0, T] \subset \mathcal{I}$.
This is clear if $\lambda \leq 0$,  as then $(-\infty, T] \subset \mathcal{I}$.
Otherwise, if $\lambda > 0$ and  
$0 \leq a\df T - 1/\lambda =  \inf \mathcal{I}$, we claim that 
$g(t)$ cannot have bounded curvature on $[0, T]$.  Since $\bar{g}$ is not flat, we can find $x_0 \in M$
with $\Rmb(x_0) \ne  0$.  But then, since $g(t) = c(t)\varphi_{t}^*(\bar{g})$ on $(a, T]$,
\[
	|\Rm|^2_{g(t)}(\phi^{-1}_{t}(x_0)) = \frac{1}{(\lambda(t- T) + 1)^2}|\Rmb|^2_{\bar{g}}(x_0).
\]  
which implies $\limsup_{t\to a}|\Rm|^2_{g(t)} = \infty$.
So we must have $a < 0$.
\end{proof}
\begin{acknowledgement*}
	The author wishes to thank Professors Bennett Chow and Lei Ni for useful discussions and their advice 
	regarding an early draft of the paper.
\end{acknowledgement*}

\bibliographystyle{amsalpha}

\end{document}